\theoremstyle{plain}
\newtheorem{thm}{Theorem}[section]
\newtheorem{theorem}[thm]{Theorem}
\newtheorem{proposition}[thm]{Proposition}
\newtheorem{lemma}[thm]{Lemma}
\newtheorem{corollary}[thm]{Corollary}
\theoremstyle{definition}
\newtheorem{definition}[thm]{Definition}
\newtheorem{example}[thm]{Example}
\newtheorem{remark}[thm]{Remark}
\definecolor{linkred}{RGB}{255,1,1}
\definecolor{citegreen}{RGB}{1,190,1}
\setlist[enumerate,1]{label=\textup{(\arabic*)},
  ref=\textup{(}\arabic*\textup{)}}
\setlist[enumerate,2]{label=\textup{(}\roman*\textup{)},
  ref=\textup{(}\roman*\textup{)}}
\newcommand{\defit}[1]{\textbf{#1}}
\newcommand{\bN}{\mathbb N}
\newcommand{\bZ}{\mathbb Z}
\newcommand{\diff}{D}
\newcommand{\hmnser}[2]{{#1}(\!({#2})\!)}
\DeclarePairedDelimiter{\card}{\lvert}{\rvert}
\DeclarePairedDelimiter{\length}{\lvert}{\rvert}
\begin{document}

\title{Monoid algebras and graph products\textsuperscript{*}}
\thanks{\textsuperscript{*} Dedicated to Dragan Maru\v{s}i\v{c} on the occasion of his 70\textsuperscript{th} birthday.}

\author{Wilfried Imrich}
\address{Montanuniversit\"{a}t Leoben, Leoben, Austria}
\email{imrich@unileoben.ac.at}

\author{Igor Klep\textsuperscript{\textdagger}}
\address{Faculty of Mathematics and Physics (FMF),
University of Ljubljana and
Fakulteta za matematiko, naravoslovje
in informacijske tehnologije (FAMNIT), University of Primorska, Koper
and Institute of Mathematics, Physics and Mechanics (IMFM),
Ljubljana, Slovenia}
\email{igor.klep@fmf.uni-lj.si}
\thanks{\textsuperscript{\textdagger} Supported by the Slovenian Research Agency program P1-0222 and grants J1-50002, J1-2453, N1-0217, J1-3004}

\author{Daniel Smertnig\textsuperscript{\ddag}}
\address{Faculty of Mathematics and Physics (FMF),
University of Ljubljana
and Institute of Mathematics, Physics and Mechanics (IMFM),
Ljubljana, Slovenia}
\email{daniel.smertnig@fmf.uni-lj.si}
\thanks{\textsuperscript{\ddag} Supported by the Slovenian Research and Innovation Agency (ARIS): Grant P1-0288 and the Austrian Science Fund (FWF): P~36742}

\keywords{Graph products, monoid algebras, power series rings, uniqueness of roots, cancellation property}
\subjclass[2020]{Primary 05C25, 20F16; Secondary 05C63}

\begin{abstract}
In this note, we extend results about unique $n^{\textrm{th}}$ roots and cancellation of finite disconnected graphs with respect to the Cartesian, the strong and the direct product, to the rooted hierarchical products, and to a modified lexicographic product. We show that these results also hold for graphs with countably many finite connected components, as long as every connected component appears only finitely often (up to isomorphism).
The proofs are via monoid algebras and generalized power series rings.
\end{abstract}

\maketitle

\section{Introduction}
It is well known that prime factorization of disconnected graphs with respect to the Cartesian product is in general not unique. Nonetheless,   Fern\'andez, Leighton, and
L\'opez-Presa  \cite{fele07} showed the uniqueness of $n^{\textrm{th}}$ roots for
disconnected graphs with respect to the Cartesian product, and  Imrich, Klav\v{z}ar and Rall \cite{imkl07} proved the cancellation property. They also extended the result to the strong and the direct product, but the question whether these properties extend to graphs consisting of countably many finite connected components remained open.

We provide an affirmative answer for these properties in the context of the rooted hierarchical product, the rooted generalized hierarchical product, and a modified lexicographic product.
We first show that the unique root and cancellation properties hold for the rooted generalized hierarchical product of countable graphs consisting of finite connected components of finite multiplicities,  and then extend the result to the other products.

The proofs involve monoid algebras, which are associated with the products, and generalized power series rings.
Along the way, we state sufficient conditions for a semiring of graphs under a given graph product to arise as a monoid semiring, and hence for such a graph semiring to embed into a monoid algebra.

All products considered here are defined on the Cartesian product of the vertex sets of the factors.
They are associative, and admit a neutral element. With the exception of the lexicographic product, they are right and left distributive on their domains with respect to the disjoint union. The lexicographic product is only right distributive.  Apart from the direct and the lexicographic product, they are disconnected if and only if at least one factor is disconnected.
We do not allow multiple edges,  and loops only for the direct product, and a variant of the Cartesian product.

We call a  graph \defit{prime} if it is non-trivial, that is, if it has at least two vertices,  and if it cannot be represented as a product of two non-trivial graphs. This differs from the  terminology of factorization theory, where one would speak of irreducible graphs.  Each finite graph clearly has a representation as a product of prime graphs,  and a representation as a product of prime graphs is a \defit{prime factorization}. If the presentation is unique up to isomorphisms and the order of commuting prime factors, we speak of the \defit{unique prime factorization property}.
Usually, only connected graphs have unique prime factorizations.

The disconnected graphs that we consider are countable with finite connected components, all of which are of finite multiplicity.

\section{Rooted hierarchical products and rooted generalized  hierarchical products}\label{sec:graphproducts}

The rooted generalized hierarchical product, introduced in 2009 by  Barri{\`e}re,  Dalf{\'o}, Fiol and Mitjana \cite{bado2009},  gives rise to the most interesting monoid algebra considered here. Let $G$, $H$ be graphs with distinguished, nonempty subsets $U\subseteq V(G)$, $V\subseteq V(H)$,  the \defit{root sets} of $G$ and $H$. Then the \defit{generalized rooted hierarchical product} $G[U]\sqcap H[V]$ has vertex set $V(G\times H)=V(G) \times V(H)$, root set $U\times V$, and a pair of vertices $(g,h)$, $(g',h')$ is joined by an edge if either $g=g'$ and $hh' \in E(H)$, or if $gg' \in E(G)$ and $h=h' \in V$.
The asymmetry of $G$ and $H$ in the definition
accounts for non-commutativity.

This product is closely related to the \defit{Cartesian product} $G\Box H$ of graphs without root sets, where $(g,h)(g',h')$ is an edge if either $g=g'$ and $hh' \in E(H)$, or if $gg' \in E(G)$ and $h=h'$. Clearly, the Cartesian product is commutative.

Furthermore, as the edge sets of $G[V(G)]\sqcap H[V(H)]$ and $G\Box H$ are the same, we can consider the Cartesian product as a special case of the rooted generalized hierarchical product, and call a factor \defit{Cartesian} if its root set equals its vertex set.

The neutral element of the generalized rooted hierarchical product is the trivial rooted one-point graph $K_1$.

Imrich, Kalinowski and Pil\'{s}niak \cite[Theorem 5]{imrich-kalinowski-pilsniak23} show that finite connected graphs with nonempty root sets have the unique prime factorization property with respect to the rooted generalized  hierarchical product.
This extends the unique prime factorization property of connected finite graphs with respect to the Cartesian product, which  was first shown in 1960 by Sabidussi \cite{sabi1960} and independently, in 1963,  by Vizing \cite{vi1963}.

As such the rooted generalized hierarchical product is not right distributive, unless one restricts its domain to graphs where each connected component has a non-empty root set. Clearly, this class of graphs is closed with respect to the rooted generalized hierarchical product and coincides,  on connected graphs, with the rooted hierarchical product. We call it  the \defit{modified rooted generalized hierarchical product.}

A graph whose root set consists of exactly one vertex is called \defit{rooted}. If one restricts the domain of the generalized rooted hierarchical product to rooted graphs one obtains the \defit{rooted hierarchical product}, which was introduced in 2009 by Barri{\`e}re,  Comellas,  Dalf{\'o} and  Fiol \cite{baco2009}. Again, it is not right distributive,  unless  one restricts its domain  to graphs where each connected component is rooted. We call it the  \defit{modified rooted hierarchical product.} On connected graphs  it coincides with the rooted hierarchical product.

This product is strictly non-commutative. In other words, two connected non-trivial rooted graphs $G[g]$ and $H[h]$ commute if and only if there exists a  rooted graph $A[a]$, such that $G[g]$ and $H[h]$ are powers of $A[a]$.

If $M$ is a class of rooted graphs closed with respect to the rooted hierarchical product, then the elements of $M$ form a monoid with respect to the induced operation (under the assumption that the isomorphism classes indeed form a set, to avoid set-theoretic issues).\footnote{To be precise, we consider a monoid of \emph{isomorphism classes} of graphs. However, in the interest of brevity and to follow the usual conventions, we speak of graphs with the tacit understanding that they are to be considered up to isomorphisms of rooted graphs, respectively multiply rooted graphs.}
If $M$ is taken to be the class of rooted connected finite graphs, then the resulting monoid is a free monoid on countably many generators by  \cite[Corollary 2.6]{imrich-kalinowski-pilsniak23}  (with the generators corresponding to \defit{rooted prime graphs}, the prime elements with respect to the particular operation).

For the rooted generalized hierarchical product one considers multiply rooted graphs.
By \cite[Theorem 4.2]{imrich-kalinowski-pilsniak23}
one obtains a unique decomposition in which two factors commute if and only if they are isomorphic or if they are both Cartesian.
More formally, the monoid under discussion is isomorphic to the coproduct of monoids
\[
  [Y] * \langle X \rangle,
\]
where $[Y]$ is a free abelian monoid on the countable set $Y$ (corresponding to the Cartesian rooted prime graphs) and $\langle X \rangle$ is a free monoid on the countable set $X$ (corresponding to the non-Cartesian rooted prime graphs).
The notations $[Y]$, $\langle X \rangle$ are non-standard but convenient in the present paper.

\section{Monoid algebras and generalized power series rings}\label{sec:monoidalgebras}

Let $M$ be a set of graphs that is closed under some associative product $\cdot$ and possesses a multiplicative identity element $1$.
Then $(M,\cdot, 1)$ forms a monoid.
Suppose that $M$ is also closed under disjoint unions $+$ and contains the empty graph $0$.
Then $(M, +, 0)$ forms a commutative monoid, with neutral element 0.
The monoid $(M, +, 0)$ is always cancellative, that is $a + b = a + c$ implies $b=c$, by the uniqueness of the connected components of a graph.

If the graph product under consideration is distributive with respect to the disjoint union, then the two structures combine to give a \defit{semiring} $(M, +, \cdot, 0, 1)$, with disjoint unions serving as addition and the graph product serving as multiplication.
Formally, the structure $(M,+,0)$ is a commutative monoid, the structure $(M,\cdot,1)$ is a monoid, and the following hold for all $a$, $b$,~$c \in M$:
\[
  c(a+b)=ca+cb, \qquad (a+b)c = ac + bc, \qquad 0a=0=a0.
\]
In contrast to a ring, the addition need not have inverses.
Indeed, in our case, addition is the disjoint union of graphs, so only the empty graph has an additive inverse.

More information on semirings can be found in Hebisch and Weinert \cite{hebisch-weinert98}, although the notion of a semiring is slightly more permissive there, in that it does not require a $0$. The semirings under consideration here are semirings with \emph{absorbing zero} in the terminology of Hebisch and Weinert.

\begin{definition}
  A \defit{graph semiring} is a semiring consisting of \textup(isomorphism classes of\textup) graphs, with the disjoint union of graphs as addition.
\end{definition}

In the following, we establish that, under mild conditions, graph semirings embed as sub-semirings into monoid semirings and even into (well-understood) monoid algebras.
We also establish sufficient conditions for these (semi)rings to be strictly ordered.
All of the arguments are straightforward, but since they draw from a setting that we assume most potential readers not to be very familiar with, we carry them out in some detail.

Let $S$ be a semiring and $(M,\cdot,1)$ a multiplicative monoid.
The monoid semiring of $M$, denoted by $S[M]$ consists of formal $S$-linear combinations of elements of $M$, with componentwise addition and the product extended linearly from the one on $M$.
Thus, an element of $S[M]$ is a formal sum
\[
  f=\sum_{m \in M} f_m m, \qquad\text{with $f_m \in S$, only finitely many of which are nonzero,}
\]
and where the coefficients $f_m$ are uniquely determined by the element $f$.
More formally, we can consider $S[M]$ to be the set of functions $f\colon M \to S$.
If $S$ is a ring, then so is $S[M]$.
We will only need to consider the monoid semiring $\bN_0[M]$ and the monoid algebra $\bZ[M]$.

The inclusion map $j \colon M \to \bN_0[M]$ is a multiplicative homomorphism.
The monoid semiring $\bN_0[M]$ is characterized by the following universal property: for every homomorphism of multiplicative monoids $f\colon M \to T$, with $T$ a semiring, there exists a unique semiring homomorphism $\overline{f}\colon \bN_0[M] \to T$ such that $f = \overline{f} \circ j$.

The monoid algebra $\bZ[M]$ satisfies the analogous universal property in the category of rings (that is, when $T$ is a ring).

\begin{proposition} \label{p:iso-monoidsemiring}
  Let $M$ be a graph semiring with the following properties.
  \begin{enumerate}
  \item \label{isosemi:submonoid} The connected graphs in $M$ form a submonoid $M_0$, that is, products of connected graphs are connected and the multiplicative identity is connected.
  \item \label{isosemi:span} Every graph in $M$ has a finite number of connected components.
  \end{enumerate}
  Then
  \[
    \bN_0[M_0] \cong M.
  \]
\end{proposition}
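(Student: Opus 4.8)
The plan is to identify $M$ with $\bN_0[M_0]$ via the decomposition of a graph into its connected components, and then to check that this identification respects the full semiring structure. First I would use property~\ref{isosemi:submonoid} to see that the inclusion $\iota\colon M_0\hookrightarrow M$ is a homomorphism of multiplicative monoids: it sends $1$ to $1$ (the identity is connected) and products to products (a product of connected graphs is connected). Since $M$ is a semiring, the universal property of $\bN_0[M_0]$ recalled above then yields a unique semiring homomorphism $\varphi\colon \bN_0[M_0]\to M$ with $\varphi\circ j=\iota$; explicitly, $\varphi$ sends a formal sum $\sum_{c\in M_0} n_c\,c$ to the disjoint union in $M$ of $n_c$ copies of each connected graph $c$. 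The remaining task is to show that $\varphi$ is a bijection.

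For injectivity, suppose $\varphi(f)=\varphi(g)$ where $f=\sum_c n_c\,c$ and $g=\sum_c m_c\,c$ in $\bN_0[M_0]$. Then $\varphi(f)$ and $\varphi(g)$ are isomorphic graphs, so they have, up to isomorphism, the same connected components with the same multiplicities; hence $n_c=m_c$ for every $c$, i.e.\ $f=g$. (This is the same uniqueness of connected components that was invoked above to see that $(M,+,0)$ is cancellative.) For surjectivity, given $g\in M$ I would decompose $g$ into its connected components: by property~\ref{isosemi:span} there are only finitely many, say each $c\in M_0$ occurs with multiplicity $n_c\in\bN_0$, and each component is a connected graph in $M$, hence lies in $M_0$. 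Then $g=\sum_c n_c\,c=\varphi\bigl(\sum_c n_c\,c\bigr)$, so $\varphi$ is onto, and therefore a semiring isomorphism $\bN_0[M_0]\cong M$.

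I expect every step here to be routine; in particular the compatibility of the two multiplications comes for free from the universal property (or, if one prefers, from a direct check using distributivity of the graph product over disjoint union in $M$). The only point that really has to be used --- and the one I would be careful to spell out --- is the claim in the surjectivity argument that the connected components of a graph in $M$ again belong to $M$, and hence to $M_0$; this is what guarantees that the component decomposition of an element of $M$ lands inside $\bN_0[M_0]$, and together with the finiteness in property~\ref{isosemi:span} it is exactly what makes $\bN_0[M_0]$ large enough to surject onto $M$.
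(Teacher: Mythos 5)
Your proposal is correct and follows essentially the same route as the paper: the universal property of $\bN_0[M_0]$ gives the semiring homomorphism, injectivity comes from uniqueness of connected components, and surjectivity from condition~\ref{isosemi:span}. The one point where you are more explicit than the paper --- that the connected components of a graph in $M$ must themselves lie in $M$, hence in $M_0$, for the surjectivity argument to go through --- is a reasonable thing to flag, but it is implicit in how the paper uses its graph semirings and does not change the argument.
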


\begin{proof}
  By the universal property of the monoid semiring, there exists a semiring homomorphism $\pi \colon \bN_0[M_0] \to M$ with $\pi(m)=m$ for all $m \in M_0$.
  By condition~\ref{isosemi:span}, $\pi$ is surjective.
  To show injectivity, let
  \[
  f = \sum_{m \in M_0} f_m m \quad\text{and}\quad g = \sum_{m \in M_0} g_m m \in \bN_0[M_0]
  \]
  with $\pi(f)=\pi(g)$.
  Then $\pi(f)$ has precisely $f_m$ connected components isomorphic to $m$, and $\pi(g)$ has $g_m$ such components. Hence $f_m=g_m$.
\end{proof}

\begin{remark}
  \begin{enumerate}
    \item Condition~\ref{isosemi:span} is clearly necessary for surjectivity, as we have no way of representing a graph with infinitely many isomorphic connected components in $\bN_0[M_0]$.
    This can easily be fixed by replacing $\bN_0$ with a larger semiring, involving infinite cardinals and their cardinal arithmetic.
    In the cases of interest to us, we will later exploit $\bN_0[M]\setminus\{0\}$ being multiplicatively cancellative, which does not generalize to infinite cardinals due to the non-cancellative nature of cardinal arithmetic.
    See Example~\ref{exm:infinite-bad}.

    \item Not every graph product preserves connectivity, for instance, the direct product of two connected bipartite graphs is not connected.
  \end{enumerate}
\end{remark}

We now slightly extend our notions to permit infinitely many connected components in our graphs, as long as each connected component has finite multiplicity (Example~\ref{exm:infinite-bad} illustrates why this condition is  natural).
Order relations on monoids and semirings will play a central role in the following arguments.

\begin{definition}
  \mbox{}
  \begin{enumerate}
    \item A monoid $(M,\cdot, 1)$ is \defit{strictly ordered} if there exists a \textup(total\textup) order $\le$ on $M$ such that
    \[
    a < b \qquad\text{implies}\qquad ac < bc \text{ and } ca < cb \qquad\text{for all $a$, $b$,~$c \in M$.}
  \]
  \item A monoid is \defit{strictly well-ordered} if it is strictly ordered by a well-order.
  \item A semiring $S$ is \defit{strictly ordered} if there exists a \textup(total\textup) order on $S$ such that for all $a$, $b$,~$c \in S$, with $a < b$,
  \[
  a + c < b + c, \qquad\text{if $c > 0$, then } ac < bc, \qquad\text{and}\qquad\text{if $c < 0$, then } ac > bc.
  \]
  \end{enumerate}
\end{definition}

Let $M$ be a strictly well-ordered monoid and let $S$ be a semiring.
We define a \defit{generalized power series ring} $\hmnser{S}{M}$ as follows.
The elements $f$ of $\hmnser{S}{M}$ are functions $f \colon M \to S$, which we represent in a power series notation
\[
f = \sum_{m \in M} f(m) m.
\]
The addition is defined pointwise, and the multiplication is defined in the usual way using the Cauchy product
\[
fg(m) = \sum_{\substack{x, y \in M \\ m=xy}} f(x)g(y).
\]
Crucially, the strict well-order on $M$ ensures that the sum on the right sides has only finitely many terms, so that the expression indeed makes sense.
It is then straightforward to verify that $\hmnser{S}{M}$ is a semiring.
If $S$ is a ring, then $\hmnser{S}{M}$ is even a ring.
Again, we will only be interested in the semiring $\hmnser{\bN_0}{M}$ and the ring $\hmnser{\bZ}{M}$.

\begin{remark}
  The definition of $\hmnser{S}{M}$ is a mild variation on various existing definitions.
  For a strictly ordered monoid $M$ and a (semi)ring $R$ one can define the ring of generalized power series $R(\!(M)\!)$ consisting of all series having well-ordered support.
  The case when $M$ is a group (and typically $S$ is a field) is that of Hahn--Mal'cev--Neumann series.
  For $M$ a commutative monoid and $S$ a commutative ring, the notion has been studied in particular by Ribonboim in the 1990s in a series of papers, see for instance \cite{ribenboim92}.
  More general noncommutative and skew versions have recently also been intensely studied, we mention \cite{mazurek-paykan17,tuganbaev20} as entry points into the extensive literature.
  In the context of semirings, the semiring of formal power series $S\llangle X \rrangle$, which is $\hmnser{S}{M}$ with $M=\langle X \rangle$ a finitely generated free monoid, appears in the study of noncommutative rational series and weighted automata \cite{berstel-reutenauer11}.
\end{remark}

\begin{proposition} \label{p:iso-series}
  Let $M$ be a graph semiring with the following properties.
  \begin{enumerate}
  \item \label{isoseries:submonoid} The connected graphs in $M$ form a submonoid $M_0$.
  \item \label{isoseries:fin} No graph in $M$ has infinitely many isomorphic connected components.
  \item \label{isoseries:all} The set $M$ contains all \textup(infinite\textup) disjoint unions of elements of $M_0$, subject to the restriction \textup{\ref{isoseries:fin}}.
  \item \label{isoseries:order} The monoid $M_0$ is a strictly well-ordered monoid.
  \end{enumerate}
  Then
  \[
    \hmnser{\bN_0}{M_0} \cong M.
  \]
\end{proposition}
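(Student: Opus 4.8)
The plan is to write down an explicit semiring isomorphism $\Phi\colon \hmnser{\bN_0}{M_0} \to M$ together with its inverse, extending the argument of Proposition~\ref{p:iso-monoidsemiring} to allow infinite support. Since the order on $M_0$ is a well-order, every subset of $M_0$ is well-ordered, so $\hmnser{\bN_0}{M_0}$ is simply the set of all functions $f\colon M_0 \to \bN_0$. For such an $f$ put
\[
  \Phi(f) = \sum_{m \in M_0} f(m)\, m,
\]
the (in general infinite) disjoint union in which each connected graph $m$ occurs with multiplicity $f(m)$; by hypothesis~\ref{isoseries:all}, and because each multiplicity is finite, this lies in $M$, so $\Phi$ is well defined. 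In the other direction, send $a \in M$ to the function $\Psi(a)\colon M_0 \to \bN_0$ assigning to $m$ the number of connected components of $a$ isomorphic to $m$, which is finite by~\ref{isoseries:fin}. As the connected components of any graph in $M$ again lie in $M$, hence in $M_0$, every $a$ is the disjoint union of its components, so $\Phi(\Psi(a)) = a$; and $\Psi(\Phi(f)) = f$ is the uniqueness of the decomposition into connected components, exactly as in Proposition~\ref{p:iso-monoidsemiring}. Thus $\Phi$ is a bijection.

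It remains to check that $\Phi$ respects the semiring structure. Preservation of $0$ (the empty graph) and of $1$ (the identity graph, which is connected by~\ref{isoseries:submonoid}, hence lies in $M_0$), together with additivity $\Phi(f+g) = \sum_m (f(m)+g(m))\, m = \Phi(f) + \Phi(g)$, are immediate.

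The substance of the proof is multiplicativity, $\Phi(fg) = \Phi(f)\cdot\Phi(g)$. Since $\Phi(f)\cdot\Phi(g) \in M$, it suffices to show that its connected-component multiplicities agree with those of $\Phi(fg)$. Fix a connected component $X$ of $\Phi(f)$ and a connected component $Y$ of $\Phi(g)$, and consider the induced subgraph of $\Phi(f)\cdot\Phi(g)$ on the vertex set $V(X)\times V(Y)$: because $X$ and $Y$ are full connected components, this subgraph is isomorphic to the product $X\cdot Y$, hence connected by~\ref{isoseries:submonoid}. Writing $\Phi(f) = X + \Phi(f)'$ and $\Phi(g) = Y + \Phi(g)'$, where $\Phi(f)', \Phi(g)' \in M$ by~\ref{isoseries:all} collect the remaining components, and expanding $\Phi(f)\cdot\Phi(g)$ by distributivity into the four summands $X\cdot Y$, $X\cdot\Phi(g)'$, $\Phi(f)'\cdot Y$, $\Phi(f)'\cdot\Phi(g)'$ (only finite distributivity, applied twice, is used), one sees that there are no edges of $\Phi(f)\cdot\Phi(g)$ between $V(X)\times V(Y)$ and the rest, so $X\cdot Y$ is exactly one connected component. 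As $(X,Y)$ ranges over all such pairs these components are pairwise disjoint and exhaust $\Phi(f)\cdot\Phi(g)$; hence the multiplicity of a given $p \in M_0$ is $\sum_{x,y \in M_0,\ xy = p} f(x) g(y)$. By~\ref{isoseries:order} this is a finite sum — the very finiteness that makes the Cauchy product on $\hmnser{\bN_0}{M_0}$ well defined — so it equals $(fg)(p)$. Therefore $\Phi(f)\cdot\Phi(g)$ and $\Phi(fg)$ have the same component multiplicities, and $\Phi(f)\cdot\Phi(g) = \Phi(fg)$.

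I expect the main obstacle to be exactly this last step: upgrading the finite distributivity supplied by the semiring axioms to the locally finite distributivity over infinite disjoint unions that is needed here, while keeping careful track that the two distinct finiteness phenomena in play — finiteness of each connected component (used to split off a finite part via~\ref{isoseries:all}) and the well-order on $M_0$ (used to keep $\sum_{xy=p} f(x)g(y)$ finite) — conspire so that no infinite multiplicities ever arise. The remaining verifications (bijectivity, additivity, unitality) are routine and run in parallel with Proposition~\ref{p:iso-monoidsemiring}.
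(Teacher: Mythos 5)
Your proof is correct and takes essentially the same route as the paper: the identical coefficient-to-multiplicity map, with \ref{isoseries:submonoid} and \ref{isoseries:order} making $\hmnser{\bN_0}{M_0}$ well defined, \ref{isoseries:all} giving well-definedness of the map, \ref{isoseries:fin} giving surjectivity, and uniqueness of connected components giving injectivity. The only difference is that you spell out the multiplicativity check (partitioning the product's vertex set into blocks $V(X)\times V(Y)$ and applying finite distributivity twice), which the paper dismisses as routine; the infinite-distributivity worry you flag is settled exactly as you settle it, because all products in the paper are defined on the Cartesian product of the vertex sets.
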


\begin{proof}
  The conditions~\ref{isoseries:submonoid} and \ref{isoseries:order} ensure that $\hmnser{\bN_0}{M_0}$ is well-defined.
  The map
  \[
  f\colon \hmnser{\bN_0}{M_0} \to M,\quad \sum_{m \in M_0} f_m m \mapsto \sum_{m \in M_0} f_m m,
  \]
  is well-defined by \ref{isoseries:all} (recall that the sum on the right is the disjoint union of graphs, whereas the sum on the left just stems from the power series notation used for $\hmnser{\bN_0}{M_0}$).
  It is routine to check that $f$ is a semiring homomorphism.
  The uniqueness of the connected components again implies that $f$ is injective.
  Condition~\ref{isoseries:fin} ensures that $f$ is surjective.
\end{proof}

\begin{proposition} \label{p:semiring-order}\mbox{}
  Let $S$ be a strictly ordered semiring and let $M$ be a strictly ordered monoid.
  \begin{enumerate}
    \item $S[M]$ is a strictly ordered semiring.
    \item If the order on $M$ is a well-order, then $\hmnser{S}{M}$ is a strictly ordered semiring.
  \end{enumerate}
\end{proposition}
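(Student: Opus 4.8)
The plan is to put a ``first point of disagreement'' order on each of $S[M]$ and $\hmnser{S}{M}$ and then to check the three semiring-order conditions. Write $T$ for either of these (semi)rings. For $f \ne g$ in $T$ set $D(f,g) = \{\, m \in M : f(m) \ne g(m)\,\}$, a non-empty set, which has a least element $m_0$: when $T = S[M]$ because $D(f,g)$ is finite (it lies in the union of the two finite supports), and when $T = \hmnser{S}{M}$ because $M$ is well-ordered. Declare $f < g$ iff $f(m_0) < g(m_0)$ in $S$, where $m_0 = \min D(f,g)$. Totality of this relation is immediate from totality of the order on $S$, and transitivity is the usual lexicographic bookkeeping: if $m_1 = \min D(f,g)$ and $m_2 = \min D(g,h)$, then $f$ and $h$ agree strictly below $\min(m_1,m_2)$, and at $\min(m_1,m_2)$ the relevant $S$-inequalities chain together, so $\min D(f,h) = \min(m_1,m_2)$ with the correct sign.

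For compatibility with addition I would first record that a strictly ordered semiring is additively cancellative (if $a+c=b+c$ then neither $a<b$ nor $a>b$ is possible), so $D(f+c,g+c) = D(f,g)$ with the same least element $m_0$; then $f(m_0)+c(m_0) < g(m_0)+c(m_0)$ by the additive axiom in $S$, giving $f+c < g+c$.

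The main work — and the main obstacle, precisely because $S$ is only a semiring, so one cannot pass to $f-g$ and read off a leading coefficient — is multiplicative compatibility. Assume $f<g$ and $c>0$; let $m_0 = \min D(f,g)$ (so $f(m_0)<g(m_0)$) and $m^\ast = \min\operatorname{supp}(c)$ (the statement $c>0$ in $T$ unwinds exactly to $c(m^\ast)>0$ in $S$), and put $\tilde m = m_0 m^\ast$. In the Cauchy sum $(fc)(m) = \sum_{xy=m} f(x)c(y)$, separate the pairs with $x<m_0$ from those with $x\ge m_0$: on the first block $f$ and $g$ agree, so $(fc)(m)$ and $(gc)(m)$ contain a common summand $Q_m \in S$; in the second block only pairs with $y\in\operatorname{supp}(c)$, hence $y\ge m^\ast$, contribute non-trivially, and then $xy \ge m_0 m^\ast = \tilde m$ by strict monotonicity of the monoid product. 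Thus $(fc)(m) = (gc)(m)$ for every $m < \tilde m$, while at $m = \tilde m$ strict monotonicity together with cancellation in $M$ forces the unique contributing pair to be $(m_0,m^\ast)$, so $(fc)(\tilde m) = Q_{\tilde m} + f(m_0)c(m^\ast)$ and $(gc)(\tilde m) = Q_{\tilde m} + g(m_0)c(m^\ast)$. Since $c(m^\ast)>0$, the multiplicative axiom in $S$ gives $f(m_0)c(m^\ast) < g(m_0)c(m^\ast)$, and adding $Q_{\tilde m}$ preserves this; hence $\tilde m = \min D(fc,gc)$ and $fc < gc$. The case $c<0$ is word-for-word the same with the final inequality reversed.

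Two small points round things off. For the manipulation of the Cauchy sum in $\hmnser{S}{M}$ one needs each fibre $\{(x,y) : xy=m\}$ to be finite; this is exactly the property already used to define the product there, and it follows from the strict well-order on $M$ (an infinite fibre would yield a strictly increasing sequence of first coordinates, forcing a strictly decreasing sequence of second coordinates, contradicting the well-order). For $S[M]$ one uses only that $M$ is strictly ordered, since finiteness of supports makes every relevant set finite; so the two parts are handled by the same computation, the well-order being invoked in part (2) solely to guarantee that $\min D(f,g)$ — and, along the way, $\hmnser{S}{M}$ itself — makes sense.
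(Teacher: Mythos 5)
Your proof is correct and follows essentially the same route as the paper's: the same ``first point of disagreement'' order, the same splitting of the Cauchy product at $m_0m^\ast$ into a common block (where $f$ and $g$ agree) plus the single leading term $(m_0,m^\ast)$, isolated via strict monotonicity and the resulting cancellativity of $M$. Your write-up is if anything slightly more explicit than the paper's (e.g.\ in justifying $D(f+c,g+c)=D(f,g)$ and the finiteness of the Cauchy fibres), but there is no substantive difference.
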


\begin{proof}
  We first deal with $\hmnser{S}{M}$, as $S[M]$ is similar but easier.
  Let $f \ne g \in \hmnser{S}{M}$.
  Let $m \in M$ be the smallest element for which $f(m) \ne g(m)$ (the minimum exists because $M$ is well-ordered).
  We define $f < g$ if  $f(m) < g(m)$.
  The relation $\le$ is reflexive and anti-symmetric.

  For transitivity, let $f < g$ and $g < h$. Let $m$,~$m' \in M$ be the smallest elements for which $f(m) \ne g(m)$, respectively, $g(m') \ne h(m')$.
  If $m=m'$, then transitivity of the order on $S$ implies $f <h$.
  If $m < m'$, then $f(m) < g(m) = h(m)$ and $f(m'')=g(m'')=h(m'')$ for all $m'' < m$.
  If $m > m'$, then $f(m')=g(m')<h(m')$ and $f(m'')=g(m'')=h(m'')$ for all $m'' < m'$.
  In any case $f < h$.

  If $f < g$, then $f+h < g+h$ follows from the coefficient semiring $S$ having the same property.
  Note that $h > 0$ if and only if $h \ne 0$ and the coefficient of the smallest element of the support of $h$ is positive in $S$.
  Let $f < g$ and $h> 0$. We have to show $fh < gh$.
  If $x < x'$ and $y < y'$ in $M$, then $xy < xy' < x'y'$.
  Let $m \in M$ be the smallest element for which $f(m) \ne g(m)$, and let $m_0 \in M$ be the smallest element for which $h(m_0) \ne 0$.
  Then
  \[
  fh(mm_0) = f(m)h(m_0) + \sum_{\substack{x,y \in M \\ mm_0=xy,  (x,y) \ne (m,m_0)}} f(x)h(y).
  \]
  For any nonzero summand in the right sum, we have $y > m_0$, and hence $x < m$.
  But then $f(x)=g(x)$, so that
  \[
  fh(mm_0) =  f(m)h(m_0) + \sum_{\substack{x,y \in M \\ mm_0=xy,  (x,y) \ne (m,m_0)}} g(x)h(y).
  \]
  Since also
  \[
  gh(mm_0) =  g(m)h(m_0) + \sum_{\substack{x,y \in M \\ mm_0=xy,  (x,y) \ne (m,m_0)}} g(x)h(y),
  \]
  the strict order on $S$ gives $gh(mm_0) > fh(mm_0)$ (we used $h(m_0) > 0$ and $f(m) < g(m)$).
  If $m' < mm_0$, then a similar argument shows $gh(m')=fh(m')$, so that altogether $fh < gh$.

  If $h < 0$, we find $fh > gh$ in the same way using $h(m_0) < 0$.
  Hence $\hmnser{S}{M}$ is a strictly ordered semiring.

  In the same way, it follows that $S[M]$ is a strictly ordered semiring.
  Because the support of every element is finite, each such non-empty set has a minimum, even when $M$ is not well-ordered.
\end{proof}

\begin{proposition} \label{p:semiring-unique}
  Let $S$ be a strictly ordered semiring, and let $a$,~$b \in S$, and $c \in S \setminus \{0\}$.
  \begin{enumerate}
    \item If $ac=bc$, then $a=b$, and symmetrically, if $ca=cb$, then $a=b$.
    \item If $a$,~$b \ge 0$ and $a^n = b^n$ for some $n \ge 1$, then $a = b$.
  \end{enumerate}
\end{proposition}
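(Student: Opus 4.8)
The key point is that in a strictly ordered semiring the order is compatible with multiplication by nonzero elements in a strict way, which immediately yields both cancellation statements. I would first record the trivial trichotomy observation: for any $a$, $b \in S$ exactly one of $a < b$, $a = b$, $a > b$ holds, since $\le$ is a total order. For part (1), suppose $ac = bc$ with $c \ne 0$. If $a \ne b$, then either $a < b$ or $b < a$. In the first case, if $c > 0$ then $ac < bc$ and if $c < 0$ then $ac > bc$; either way $ac \ne bc$, a contradiction. The case $b < a$ is symmetric. Hence $a = b$. The left-cancellation statement follows by the same argument using the other half of the definition of a strictly ordered semiring (the conditions are stated symmetrically in $c$ on both sides, or one observes that left and right multiplication behave identically with respect to $<$).

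For part (2), assume $a$, $b \ge 0$ and $a^n = b^n$ with $n \ge 1$. The plan is to induct on $n$, or more directly to argue that $a < b$ forces $a^n < b^n$. The main technical step is: \emph{if $0 \le a < b$ then $a^n < b^n$ for all $n \ge 1$}. I would prove this by induction on $n$, the base case $n = 1$ being the hypothesis. For the inductive step, suppose $a^{n} < b^{n}$. We want $a^{n+1} < b^{n+1}$, i.e. $a \cdot a^{n} < b \cdot b^{n}$. Here one has to be slightly careful because the obvious chain $a^{n+1} = a \cdot a^n$ versus $b^{n+1} = b \cdot b^n$ changes \emph{both} factors at once, so I would interpolate through the intermediate term $a \cdot b^n$ (or $b \cdot a^n$). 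Concretely: from $a^n < b^n$ and $a \ge 0$ we get $a \cdot a^n \le a \cdot b^n$, with strict inequality when $a > 0$; and from $a < b$ and $b^n \ge 0$ we get $a \cdot b^n \le b \cdot b^n$, with strict inequality when $b^n > 0$. Since $b > a \ge 0$ we have $b > 0$, and since a strictly ordered semiring has no zero divisors among positive elements (if $b > 0$ then $b^n > 0$, proved by an easy induction using $b \cdot 0 = 0 < b \cdot b^{n-1}$), the second inequality is strict, giving $a^{n+1} \le a b^n < b^{n+1}$, hence $a^{n+1} < b^{n+1}$.

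With this lemma in hand, part (2) is immediate: if $a \ne b$ then without loss of generality $a < b$ (both are $\ge 0$), so $a^n < b^n$, contradicting $a^n = b^n$; hence $a = b$.

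The only mildly delicate point is the handling of the boundary cases where one of $a$, $b$ equals $0$ in the inductive step — but since $b > a \ge 0$ forces $b > 0$, and we only need strictness coming from the $b^n > 0$ side, the argument goes through uniformly; the case $a = 0$ is in fact the easiest, as then $a^n = 0 < b^n$ directly. So I expect the main obstacle to be nothing more than being careful to vary one factor at a time when passing from $a^n < b^n$ to $a^{n+1} < b^{n+1}$, together with the small preliminary observation that positivity is preserved under powers.
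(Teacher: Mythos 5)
Your proof is correct and follows essentially the same route as the paper's: trichotomy of the total order for the cancellation statements, and an induction showing $a<b$ implies $a^n<b^n$ by varying one factor at a time (the paper interpolates through $b^{n-1}a$ where you use $ab^n$, and it splits off the case $a=0$ beforehand rather than absorbing it into the induction). Your extra care about $b^n>0$ and about left- versus right-multiplication compatibility only makes explicit what the paper's proof uses implicitly.
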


\begin{proof}
  We first show cancellativity.
  Suppose that $a \ne b \in S$.
  Without restriction $a < b$.
  Let $0 \ne c \in S$.
  If $c > 0$, then $ac < bc$, and hence $ac \ne bc$.
  If $c < 0$, then $ac > bc$, and again $ac \ne bc$.

  If $a^n=b^n$ for some $n \ge 1$ and $a=0$, then also $b=0$, by the same argument as before.
  Let $a$,~$b \in S$ with $a$,~$b > 0$.
  Suppose $a \ne b$.
  Without restriction $a < b$.
  We show $a^n < b^n$ for all $n \ge 2$ by induction on $n$.
  Indeed, if $a^{n-1} < b^{n-1}$ then $a^n=a^{n-1}a < b^{n-1}a$ and $b^{n-1}a < b^{n-1}b=b^n$.
\end{proof}

To show the cancellation, respectively, unique root property for a graph product, it is therefore sufficient to show that the corresponding graph semiring is strictly ordered.

The free abelian monoid $[Y]$ and the free monoid $\langle X \rangle$ are both strictly well-ordered.
On $[Y]$, we use the lexicographical order.
Explicitly, after choosing an arbitrary order on $Y$, elements $a \in [Y]$ have a unique representation $a=y_1^{e_1} \cdots y_r^{e_r}$ with $y_1 < \cdots < y_r$ and $e_i > 0$.
We compare two elements by comparing the exponent of the smallest factor (and set $1 \le a$ for all $a$).

In the case of the free monoid $\langle X \rangle$, we use the \emph{shortlex} order.
We choose an arbitrary well-order on the alphabet $X$.
Elements of $\langle X \rangle$ are words in $X$, and we set $a < b$ if either $\length{a} < \length{b}$, or $\card{a} = \card{b}$ and the left-most letter in which $a$ and $b$ differ is smaller in $a$ than in $b$.
Note that the lexicographic order itself (without comparing lengths first) does \emph{not} give a well-order, as in that case, if $x$, $y$ are letters with $x < y$, then
\[
  y > xy > x^2y > \dots,
\]
is an infinite descending chain.

Thus both $[Y]$ and $\langle X \rangle$ are strictly well-ordered monoids.
Before we can show that this well-ordering ascends to the coproduct, we need another lemma.

\begin{lemma} \label{l:wom}
  Let $A$ be a strictly well-ordered monoid.
  \begin{enumerate}
    \item\label{wom:min} For all $a \in A \setminus \{1\}$ one has $a > 1$ and therefore $\min(A)=1$.
    \item\label{wom:ff} Every $a \in A$ has only finitely many representations $a=a_1 \cdots a_n$ with $n \ge 0$ and $a_1$, \dots,~$a_n \in A \setminus \{1\}$.
  \end{enumerate}
\end{lemma}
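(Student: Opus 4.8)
The plan is to prove \ref{wom:min} by a one-line descending-chain argument and then bootstrap it, together with cancellativity, into \ref{wom:ff}.

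For \ref{wom:min}: suppose toward a contradiction that $a < 1$ for some $a \in A$. Right-multiplying $a < 1$ by $a$, and iterating (each step using strictness of the order), yields $a > a^2 > a^3 > \cdots$, an infinite strictly descending chain in the well-ordered set $A$ --- impossible. Hence $1 \le a$ for every $a \in A$; as $1 \in A$, it is the least element, and $a > 1$ whenever $a \ne 1$.

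Before \ref{wom:ff} I would record two elementary consequences of $A$ being strictly ordered. First, $A$ is cancellative on both sides, since $ca = cb$ (resp.\ $ac = bc$) rules out both $a < b$ and $b < a$. Second, any product $a_1 \cdots a_n$ of finitely many factors all $> 1$ is itself $> 1$: inductively $a_2 \cdots a_n \ge 1$ by \ref{wom:min}, so $a_1(a_2 \cdots a_n) \ge a_1 \cdot 1 = a_1 > 1$. In particular the only admissible factorization of $1$ is the empty one. From cancellativity, whenever $d$ left-divides $a$ the quotient $d^{-1}a$ (meaning the unique $a'$ with $da' = a$) is well-defined, and if $d > 1$ then $d^{-1}a < a$.

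Now for \ref{wom:ff}, I would argue by well-founded induction on the well-order of $A$: let $a$ be a least element admitting infinitely many factorizations $a = a_1 \cdots a_n$ with $n \ge 0$ and all $a_i > 1$. By the above $a \ne 1$, so $a > 1$, every such factorization has $n \ge 1$, and its first factor lies in the set $D = \{\, d \in A : 1 < d \le a \text{ and } d \text{ left-divides } a \,\}$. The crux is that $D$ is finite. If not, then $D$ --- as a subset of a well-ordered set --- contains an infinite strictly increasing chain $d_1 < d_2 < \cdots$; but then $d_i < d_{i+1}$ forces $d_i^{-1}a > d_{i+1}^{-1}a$ (otherwise $a = d_i(d_i^{-1}a) \le d_i(d_{i+1}^{-1}a) < d_{i+1}(d_{i+1}^{-1}a) = a$, absurd), so $d_1^{-1}a > d_2^{-1}a > \cdots$ is an infinite descending chain, a contradiction. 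With $D$ finite, the pigeonhole principle gives a single $d \in D$ serving as first factor of infinitely many factorizations of $a$; deleting this common first factor sends these, injectively by left cancellation, to factorizations of $d^{-1}a$, so $d^{-1}a$ has infinitely many factorizations --- yet $d^{-1}a < a$, contradicting minimality of $a$. The main obstacle is exactly this finiteness of $D$: a well-order bounds neither the lengths of chains nor the sizes of intervals, so one genuinely needs cancellativity plus the ``a longer left factor forces a shorter right factor'' monotonicity to convert a hypothetical infinite family of left divisors into a forbidden infinite descending chain.
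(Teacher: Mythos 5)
Your proof of part \ref{wom:min} is the same descending-chain argument the paper uses. For part \ref{wom:ff} your argument is correct but takes a genuinely different route. The paper lifts the problem to the free monoid $\langle A\setminus\{1\}\rangle$ with the subword quasi-order, invokes Higman's lemma to get a well-quasi-order, and notes that a nontrivial subword relation between two distinct factorizations of the same element would force $a > a$ in $A$. You instead run a well-founded induction on the well-order of $A$: a minimal element $a$ with infinitely many factorizations has all first factors in the set $D$ of nontrivial left divisors of $a$; $D$ is finite, because an infinite increasing chain $d_1<d_2<\cdots$ in $D$ would (via two-sided cancellativity and strict monotonicity) yield an infinite decreasing chain of cofactors $d_1^{-1}a>d_2^{-1}a>\cdots$; pigeonhole and left cancellation then transfer infinitely many factorizations to the strictly smaller element $d^{-1}a$, contradicting minimality. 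Every step checks out: first factors satisfy $a_1\le a$ because cofactors are $\ge 1$ by \ref{wom:min}, the quotient $d^{-1}a$ is well defined by left cancellation, $d>1$ gives $d^{-1}a<a$, and an infinite subset of a well-ordered set does contain an increasing $\omega$-chain. Your approach buys self-containedness (no appeal to Higman's lemma), at the price of a longer argument; the paper's buys brevity and makes explicit the connection, noted in the remark following the lemma, to the finite-factorization literature where Higman's lemma is the standard tool.
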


\begin{proof}
  \ref{wom:min} Let $a \in A \setminus \{1\}$ and suppose $a \not> 1$.
   Then $a < 1$, and $a^{n-1} > a^n$ for all $n \ge 1$.
   Thus $\{\, a^n : n \ge 0 \,\}$ has no minimum, in contradiction to the well-ordering of $A$.

  \ref{wom:ff}
  Recall that a \defit{quasi-order} is a reflexive and transitive relation.
  A \defit{well-quasi-order} is a quasi-order $\preccurlyeq$ such that for every infinite family $(a_i)_{i \in \bN_0}$ there exist $i < j$ with $a_i \preccurlyeq a_j$.

  We consider the free monoid $M = \langle A\setminus\{1\} \rangle$ of all words in $A\setminus \{1\}$.
  To distinguish the products in $M$ and in $A$, we use the symbol $*$ for the product on $M$.
  We have to show that the homomorphism $\pi\colon M \to A$, $a_1 * \cdots * a_m \mapsto a_1 \cdots a_m$ has finite fibers.

  On $M$ we can define the subword quasi-order as follows:
  If $w=a_1 *\cdots *a_m$ and $w'=a_1'* \cdots *a_n'$ then $w \preccurlyeq w'$ if and only if there exists a strictly increasing map $f \colon \{1,\dots,m\} \to \{1,\dots,n\}$ such that $a_i \le a_{f(i)}'$ for all $1 \le i \le m$ (here $\le$ is the well-order on $A$).
  By Higman's lemma (see e.g. \cite[Theorem 4.3]{higman52} or \cite[Theorem 1.6]{milner85}), the quasi-order $\preccurlyeq$ is a well-quasi-order.

  Now suppose that there exists $a \in A$ such that $\pi^{-1}(\{a\})$ is infinite.
  By the well-quasi-ordering on $M$, there exist $w \ne w'$ in $\pi^{-1}(\{a\})$ with $w \preccurlyeq w'$.
  Say $w=a_1 *\cdots *a_m$ and $w'=a_1'* \cdots *a_n'$ with $a_i$,~$a_j' \in A \setminus \{1\}$.
  Let $f\colon \{1,\dots, m\} \to \{1,\dots,n\}$ be strictly increasing with $a_i \le a_{f(i)}'$.
  Then $w \ne w'$ implies $n > m$ or $a_{f(i)}' > a_{i}$ for some $i$.
  But then $a = a_1' \cdots a_n' > a_1 \cdots a_m=a$ in $A$, a contradiction.
\end{proof}

\begin{remark}
  The second claim of Lemma~\ref{l:wom} implies in particular that $A$ is a \emph{finite factorization monoid} (every element has only finitely many distinct factorizations into irreducibles).
  The application of Higman's lemma here is reminiscent of the one of Cossu and Tringali in \cite[Theorem 4.11]{cossu-tringali23}, where they deduce that for certain monoids the bounded factorization property already implies the finite factorization property.
  Unlike in their proof, we do not need to assume a priori that elements in $A$ only have factorizations of bounded lengths, because the well-ordering on $A$ together with Higman's lemma implies that.
\end{remark}

\begin{lemma} \label{l:order-coproduct}
  If $A$ and $B$ are strictly well-ordered monoids, then the coproduct $A * B$ is also strictly well-ordered.
\end{lemma}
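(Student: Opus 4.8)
The plan is to embed $A * B$ into a free monoid on an alphabet of irreducibles and pull back the shortlex well-order. Recall that each element of $A*B$ has a unique normal form $w = c_1 c_2 \cdots c_m$ as an alternating word whose syllables $c_i$ lie in $(A\setminus\{1\}) \cup (B\setminus\{1\})$, with consecutive syllables from different factors, and that the irreducibles of $A*B$ are exactly the irreducibles of $A$ together with those of $B$. Since $A$ and $B$ are strictly well-ordered, Lemma~\ref{l:wom}\ref{wom:ff} shows that every element of either factor — hence every syllable — has only finitely many factorisations into non-identity elements, in particular only finitely many factorisations into irreducibles of maximal length. First I would fix, for each syllable $c$, its lexicographically least such maximal factorisation $\overline c$ (using the well-order on the irreducibles of $A$, resp.\ of $B$), and set $\overline w = \overline{c_1}\,\overline{c_2}\cdots\overline{c_m}$. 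Letting $\Sigma$ be the set of irreducibles of $A$ and of $B$, well-ordered by putting the irreducibles of $A$ (in the order of $A$) below those of $B$ (in the order of $B$), the map $w \mapsto \overline w$ is an injection $A*B \hookrightarrow \Sigma^{*}$: one recovers $w$ from $\overline w$ by grouping maximal runs of $A$- and of $B$-letters and multiplying them out in the respective factor.

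I would then order $A*B$ by $w \le w' \iff \overline w \le \overline{w'}$ in the shortlex order on $\Sigma^{*}$. That this is a well-order is the easy half: within each fixed length the order is a finite lexicographic power of the well-order $\Sigma$, hence a well-order, and $\Sigma^{*}$ is the ordinal sum of these over lengths $0,1,2,\dots$; a subset of a well-ordered set is well-ordered, and $1$ maps to the empty word, so $\min(A*B)=1$. For the lemma only the existence of \emph{some} strict well-order matters, which is all that Proposition~\ref{p:semiring-unique} will need.

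The real content is strict compatibility: $w<w'$ implies $cw<cw'$ and $wc<w'c$. I would prove the two inequalities separately and, after the obvious reduction, assume $c,w,w'\neq 1$. The feature to watch — and the main obstacle — is \emph{syllable merging}: when $cw$ is formed, the last syllable of $c$ and the first syllable of $w$ may lie in the same factor and amalgamate, so $\overline{cw}$ need not equal $\overline c\,\overline w$, the atom expansion of a merged syllable not being the concatenation of the expansions of its parts. When the junction does not merge one has $\overline{cw}=\overline c\,\overline w$ and $\overline{cw'}=\overline c\,\overline{w'}$, so the shortlex comparison of $cw$ and $cw'$ reduces to that of $\overline w$ and $\overline{w'}$ and we are done. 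When it merges, say inside $A$, one writes $\overline{cw}$ as (expansion of the leading syllables of $c$)$\,\cdot\,$(expansion of the amalgamated syllable)$\,\cdot\,$(expansion of the tail of $w$) and must check that the shortlex comparison still faithfully tracks $w$ versus $w'$. This is where the bookkeeping concentrates; it is cleanest when the factors have unique factorisation into irreducibles — as do the free abelian monoid $[Y]$ and the free monoid $\langle X\rangle$, which are the cases actually needed — since then $\overline{xy}=\overline x\,\overline y$ inside each factor, the total expansion length is additive on $A*B$, and amalgamation merely reattaches letters without reordering them, so the comparison is unaffected beyond the common prefix contributed by $c$. For arbitrary strictly well-ordered factors one additionally has to control the growth of the expansion of an amalgamated syllable relative to the expansions of its two constituents, using that all maximal-length irreducible factorisations of a fixed element share the same length; verifying this is the delicate step.

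Once a strict, two-sided, well-order on $A*B$ is established the lemma follows, and it is precisely this that lets the later sections apply the cancellation and unique-root machinery to $[Y]*\langle X\rangle$.
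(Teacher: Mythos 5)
There is a genuine gap, and in fact the construction fails in the generality the lemma requires. The whole content of the statement is the existence of a total order on $A*B$ that is strictly compatible with multiplication on \emph{both} sides; this is exactly the step you label ``the delicate step'' and do not carry out. Moreover, the pulled-back shortlex order you propose is not compatible for arbitrary strictly well-ordered factors. Take $A=\{1,a^2,a^3,a^4,\dots\}$, the numerical monoid generated by $2$ and $3$ written multiplicatively (a submonoid of $(\bN_0,+)$, hence strictly well-ordered), with irreducibles $p=a^2<q=a^3$, and $B=\langle x\rangle$. Your expansion gives $\overline{a^3}=q$ and $\overline{x}=x$, both of length $1$, with $q<x$ in $\Sigma$, so $a^3<x$. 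But $\overline{a^3\cdot a^3}=\overline{a^6}=ppp$ has length $3$, while $\overline{a^3\cdot x}=qx$ has length $2$, so shortlex gives $a^3\cdot x< a^3\cdot a^3$, violating left compatibility. The culprit is that the maximal factorization length $\ell$ is only superadditive, not additive, on a general strictly well-ordered monoid, so multiplication can flip the length comparison that shortlex performs first. Even in the case you call clean, $A=[Y]$, the auxiliary claim $\overline{xy}=\overline{x}\,\overline{y}$ is false (the sorted expansion of $y_2\cdot y_1$ is $y_1y_2$, not $y_2y_1$), so the reduction ``amalgamation merely reattaches letters without reordering them'' does not hold as stated; a deg-then-lex order on normal forms may well work for $[Y]*\langle X\rangle$, but that would need a separate, careful proof and would not establish the lemma as stated.

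For comparison, the paper sidesteps the construction of the order entirely: it invokes the non-trivial theorem of Johnson and Bergman that a coproduct of strictly ordered monoids admits a strict (total) order, chosen so that the canonical homomorphism $\pi\colon A*B\to A\times B$ is order-preserving for the lexicographic order on $A\times B$. The paper's own work then goes into showing this order is a \emph{well}-order: by Lemma~\ref{l:wom}\ref{wom:ff} (proved via Higman's lemma) $\pi$ has finite fibers, so a nonempty subset of $A*B$ has a minimum obtained by first minimizing in $A\times B$ and then within the finite fiber. If you want to salvage your approach, you would either have to restrict the lemma to the factors actually used and prove compatibility of a deg-then-lex order by hand, or replace the explicit shortlex construction with an appeal to the orderability theorem, which is what the paper does.
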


\begin{proof}
  Since $A$ and $B$ are in particular strictly (totally) ordered, the coproduct $A * B$ can also be strictly ordered (see \cite{johnson68} or \cite[Theorem 16]{bergman90} for this non-trivial fact).
  Moreover, taking a lexicographical order on the Cartesian product $A \times B$, we can assume that the order on $A*B$ is such that the canonical monoid homomorphism $\pi \colon A * B \to A \times B$ is order-preserving, meaning $f \le g$ implies $\pi(f) \le \pi(g)$ (again \cite{johnson68} or \cite[p.322, \S5, paragraph after (20)]{bergman90}).

  We show that this order on $A*B$ is a well-order.
  Every element $f \in A*B$ can be represented as $f=a_1b_1 a_2b_2 \cdots a_m b_m$ with $a_1 \in A$, $b_m \in B$ and all other $a_i \in A \setminus \{1_A\}$ and $b_i \in B \setminus \{1_B\}$ (the first and last letter are unrestricted to permit representations that start with a factor in $B$ or end with a factor in $A$).
  Then $\pi(f) = (a_1\cdots a_m, b_1\cdots b_m)$.
  If $g=a_1'b_1' a_2'b_2' \cdots a_n' b_n'\in A * B$ is such that $\pi(f)=\pi(g)$, then $a_1 \cdots a_m = a_1'\cdots a_n'$ and $b_1\cdots b_m=b_1'\cdots b_n'$.
  From \ref{wom:ff} of Lemma~\ref{l:wom} we conclude that $\pi$ has finite fibers (in case $a_1=1_A$ we apply the lemma with $a_2\cdots a_m$, and analogously in the cases where some of $a_1'$, $b_m$, $b_n'$ are $1$).

  Now, if $\emptyset \ne  X \subseteq A*B$, then the set $\pi(X)$ has a smallest element $m_0$ by the well-ordering on $A\times B$.
  Since $\pi^{-1}(\{m_0\})$ is finite, there is a smallest $m \in \pi^{-1}(\{m_0\})$.
  We claim $m = \min(X)$.
  Let $f \in X$.
  If $f \not > m$, then $f \le m$.
  Then $\pi(f) \le \pi(m)=m_0$ by the order-preserving nature of $\pi$.
  By minimality of $m_0$, therefore $\pi(f)=\pi(m)$.
  The minimality of $m$ in the fiber shows $f=m$.
\end{proof}

For later use, we summarize the properties in the case most pertinent to us.

\begin{theorem} \label{t:free-semigroup-rings}
  Let $X$, $Y$ be \textup(possibly empty\textup) sets.
  Then the monoid $M\coloneqq [Y] * \langle X \rangle$ is strictly well-ordered, and the semirings
  \[
  \bN_0[M] \quad\text{and}\quad \hmnser{\bN_0}{M}.
  \]
  are strictly ordered.
  In particular, the products are cancellative, and $f^n =g^n$ for some $n \ge 1$ implies $f=g$.
\end{theorem}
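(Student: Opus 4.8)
The plan is to assemble the statement from the structural results already in place, so the proof is essentially a matter of citing the right lemmas in the right order. First I would recall that $[Y]$ and $\langle X\rangle$ are strictly well-ordered monoids: this was spelled out above, using the lexicographic order on the free abelian monoid $[Y]$ and the shortlex order on the free monoid $\langle X\rangle$. In the degenerate cases $Y=\emptyset$ or $X=\emptyset$ one of the two factors is the trivial one-element monoid, which is vacuously strictly well-ordered, so no special treatment is needed. Applying Lemma~\ref{l:order-coproduct} with $A=[Y]$ and $B=\langle X\rangle$ then immediately gives that $M=[Y]*\langle X\rangle$ is strictly well-ordered.

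Next I would invoke Proposition~\ref{p:semiring-order} with $S=\bN_0$ equipped with its usual total order. The semiring $\bN_0$ is strictly ordered: $a<b$ implies $a+c<b+c$ and, for $c>0$, $ac<bc$, while the condition on $c<0$ is vacuous. Part~(1) of Proposition~\ref{p:semiring-order} then yields that $\bN_0[M]$ is a strictly ordered semiring, and part~(2), using that the order on $M$ is a well-order, yields that $\hmnser{\bN_0}{M}$ is a strictly ordered semiring. This establishes both assertions of the first sentence.

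For the ``in particular'' clause I would apply Proposition~\ref{p:semiring-unique} to each of the two strictly ordered semirings $\bN_0[M]$ and $\hmnser{\bN_0}{M}$. Part~(1) gives two-sided cancellativity by nonzero elements, and part~(2) gives that $f^n=g^n$ for some $n\ge 1$ implies $f=g$. Here one should note that in both $\bN_0[M]$ and $\hmnser{\bN_0}{M}$ every element $f$ satisfies $f\ge 0$ in the order of Proposition~\ref{p:semiring-order}, since all coefficients lie in $\bN_0$, so the coefficient at the smallest element of the support is positive; hence the hypothesis $a,b\ge 0$ of Proposition~\ref{p:semiring-unique}(2) is automatic, and the root uniqueness applies to all elements.

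I do not expect any genuine obstacle, as all the real content has already been dealt with: the existence of a compatible order on the coproduct $[Y]*\langle X\rangle$ (via the Johnson/Bergman theorem) and the verification that this order is a well-order (via Higman's lemma, packaged in Lemma~\ref{l:wom} and Lemma~\ref{l:order-coproduct}). The only two points deserving an explicit word are the degenerate cases where $X$ or $Y$ is empty and the observation that nonnegativity of coefficients is automatic in $\bN_0$-based (semi)rings; both are trivial. Thus the theorem is a clean corollary of the preceding development.
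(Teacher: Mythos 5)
Your proposal is correct and follows exactly the paper's own argument: Lemma~\ref{l:order-coproduct} gives the strict well-order on $M$, Proposition~\ref{p:semiring-order} transfers it to $\bN_0[M]$ and $\hmnser{\bN_0}{M}$, and Proposition~\ref{p:semiring-unique} yields cancellation and unique roots, using that all nonzero elements are positive since coefficients lie in $\bN_0$. The extra remarks about the degenerate cases $X=\emptyset$ or $Y=\emptyset$ and the strict ordering of $\bN_0$ itself are fine but not needed beyond what the paper already records.
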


\begin{proof}
  The monoid $M$ is well-ordered by Lemma~\ref{l:order-coproduct}.
  Hence Propositions~\ref{p:semiring-order} and \ref{p:semiring-unique} imply the remaining claims (note $f$, $g > 0$ for all nonzero elements of these two semirings).
\end{proof}

\subsection{From semirings to rings}

Theorem~\ref{t:free-semigroup-rings} will be enough for our later applications.
However, since rings are much better-studied objects in algebra as compared to semirings, it is reasonable at this point to take a final small step and to pass from semirings to rings.

To do so, we need to add negative elements, which will work because a graph semiring $S$ is always cancellative with respect to addition (the disjoint union of graphs).

Let $S$ be a semiring.
A ring $R$ is a \defit{ring of differences} for $S$ if there is a semiring homomorphism $j\colon S \to R$ such that every element of $S$ has an additive inverse in $R$, and such that $R$ is universal with respect to that property.
That is, if $T$ is any ring for which there is a semiring homomorphism $f\colon S \to T$, then there exists a ring homomorphism $\overline{f}\colon R \to T$ such that $f=\overline f \circ j$.
Rings of differences are uniquely determined up to unique isomorphism by this universal property.

\begin{lemma}
  Every graph semiring $S$ can be embedded into a ring of differences $D=D(S)$.
\end{lemma}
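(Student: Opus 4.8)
The plan is to carry out the semiring analogue of the Grothendieck group construction, using the fact noted above that the additive monoid $(S,+,0)$ of a graph semiring is commutative \emph{and} cancellative. Concretely, I would set $D \coloneqq (S \times S)/{\sim}$, where $(a,b) \sim (c,d)$ if and only if $a + d = b + c$. Cancellativity of $(S,+,0)$ is precisely what makes $\sim$ transitive (no auxiliary summand has to be added), so that $\sim$ is an equivalence relation. Writing $[a,b]$ for the class of $(a,b)$, to be thought of as the formal difference $a - b$, I would equip $D$ with
\[
  [a,b] + [c,d] \coloneqq [a+c,\,b+d], \qquad [a,b]\cdot[c,d] \coloneqq [ac+bd,\; ad+bc],
\]
together with $0 \coloneqq [0,0]$, $1 \coloneqq [1,0]$, and additive inverses $-[a,b] \coloneqq [b,a]$.

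Next I would check that these operations are well defined and make $D$ a ring. Well-definedness of addition and of negation is immediate. For multiplication, if $a + b' = a' + b$, then multiplying on the right by $c$ and by $d$ and using the right distributive law in $S$ gives $ac + b'c = a'c + bc$ and $ad + b'd = a'd + bd$; combining these with commutativity and associativity of $+$ shows that replacing $(a,b)$ by $(a',b')$ leaves the class of the product unchanged. The symmetric computation in the second factor uses the left distributive law. The remaining ring axioms — associativity and commutativity of $+$, associativity of $\cdot$, the two distributive laws, and the identity laws — then follow by a routine transfer of the corresponding semiring axioms of $S$ to representatives; note that $D$ need not be commutative, in line with the non-commutativity of the graph products, and that only the \emph{additive} monoid of $S$ is used to be commutative.

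I would then observe that $j \colon S \to D$, $a \mapsto [a,0]$, is a semiring homomorphism, which is immediate from the definitions of the operations on $D$. Injectivity of $j$ is exactly where being a graph semiring is used: $[a,0] = [b,0]$ means $a + 0 = b + 0$, i.e.\ $a = b$, by cancellativity of $(S,+,0)$ (equivalently, by uniqueness of connected components). Thus $S$ embeds into $D$. For the universal property, given a ring $T$ and a semiring homomorphism $f \colon S \to T$, I would define $\overline f \colon D \to T$ by $[a,b] \mapsto f(a) - f(b)$ — meaningful since $T$ has additive inverses — check that it is well defined using the definition of $\sim$ and additivity of $f$, check that it is a ring homomorphism, and note that it is the unique map with $f = \overline f \circ j$ because $D$ is generated, as an additive group, by $j(S)$. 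Hence $D = D(S)$ is a ring of differences for $S$ containing $S$.

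The only steps needing genuine care are the well-definedness of the multiplication on $D$ and the injectivity of $j$: the former is where distributivity of the graph product over the disjoint union is consumed, and the latter is where cancellativity of the disjoint union is indispensable — without it one still obtains a ring of differences by the same formulas, but $j$ may fail to be injective. Everything else is a mechanical verification of axioms on representatives.
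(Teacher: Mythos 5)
Your proposal is correct and follows essentially the same route as the paper's own proof: the standard difference-group construction on $(S\times S)/{\sim}$ with the same equivalence relation, the same multiplication formula, the same embedding $a\mapsto[a,0]$, and the same verification of the universal property via $[a,b]\mapsto f(a)-f(b)$. You simply spell out the well-definedness and injectivity checks in more detail than the paper, which cites \cite[Theorem II.5.11]{hebisch-weinert98} and only sketches the construction.
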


\begin{proof}
  This is a consequence of the additive semigroup being cancellative \cite[Theorem II.5.11]{hebisch-weinert98}.
  For the reader's convenience, we sketch the (familiar) construction.
  Let $S$ be a graph semiring, so that $(S,+,0)$ is a cancellative monoid.
  On pairs $(m,n) \in S \times S$ one defines an equivalence relation
  \[
    (m,n) \simeq (m',n') \quad\Leftrightarrow\quad m + n' = m' + n.
  \]
  This is a congruence relation with respect to the additive structure, and $D \coloneqq (S \times S)/\!\simeq$ is an additive group.
  There is an embedding of monoids $j\colon (S,+) \rightarrow (D,+)$, $m \mapsto [(m,0)]_{\simeq}$ and, identifying $S$ with its image under this map, every element $x \in D$ has the form $x = m - n$ with $m$,~$n \in S$.
  Defining on $D$ a multiplication by
  \[
    (m - n)(m' - n') \coloneqq (mm' + nn') - (mn' + nm'),
  \]
  turns $D$ into a ring with sub-semiring $S$.

  If $T$ is a ring and $f\colon S \to T$ is a semiring homomorphism, then $\overline f \colon D \to T$ defined by $\overline f([(m,n)]_{\simeq}) = m - n$ is a well-defined semiring homomorphism satisfying $f = \overline{f} \circ j$.
\end{proof}

\begin{corollary} \label{c:iso-monoidalgebra}
  Let $M$ be a graph semiring with ring of differences $\diff(M)$, and with the following properties.
  \begin{enumerate}
  \item \label{isomonoid:submonoid} The connected graphs in $M$ form a submonoid $M_0$.
  \item \label{isomonoid:span} Every graph in $M$ has a finite number of connected components.
  \end{enumerate}
  Then
  \[
    \bZ[M_0] \cong \diff(M).
  \]
\end{corollary}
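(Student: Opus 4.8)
The plan is to show that the ring of differences $\diff(M)$, equipped with the composite map $M_0 \subseteq M \hookrightarrow \diff(M)$, satisfies the universal property that characterizes the monoid algebra $\bZ[M_0]$ in the category of rings; since an object with a given universal property is unique up to unique isomorphism, this yields $\bZ[M_0] \cong \diff(M)$.

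First I would invoke Proposition~\ref{p:iso-monoidsemiring}: conditions~\ref{isomonoid:submonoid} and~\ref{isomonoid:span} are exactly its hypotheses, so there is a semiring isomorphism $\bN_0[M_0] \cong M$ that is the identity on $M_0$. In particular $M_0$ generates $M$ as a semiring, every element of $M$ being a finite sum of elements of $M_0$ (with multiplicities), since this is visibly true in $\bN_0[M_0]$. Moreover, by the construction of the ring of differences, $M$ generates $\diff(M)$ as a ring: every element of $\diff(M)$ has the form $m - n$ with $m$,~$n \in M$. Chaining these two facts, $M_0$ generates $\diff(M)$ as a ring.

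Now let $T$ be a ring and $g\colon M_0 \to T$ a homomorphism of multiplicative monoids. Using the isomorphism $M \cong \bN_0[M_0]$ and the universal property of the monoid semiring, $g$ extends uniquely to a semiring homomorphism $\widetilde g\colon M \to T$. By the universal property of $\diff(M)$ as a ring of differences, $\widetilde g$ extends to a ring homomorphism $\overline g\colon \diff(M) \to T$, which then also extends $g$. For uniqueness, any ring homomorphism $\diff(M) \to T$ that restricts to $g$ on $M_0$ is determined on $M_0$, hence --- since $M_0$ generates $\diff(M)$ as a ring --- everywhere. Thus $\diff(M)$, together with $M_0 \hookrightarrow \diff(M)$, has the universal property of $\bZ[M_0]$, and therefore $\bZ[M_0] \cong \diff(M)$.

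I do not anticipate a genuine obstacle; the argument is a routine composition of three universal properties (of the monoid semiring $\bN_0[M_0]$, of the ring of differences, and of the monoid algebra $\bZ[M_0]$). The only points requiring a little care are verifying that $M_0$ generates $\bN_0[M_0]$ as a semiring and that $M$ generates $\diff(M)$ as a ring --- both immediate from the explicit descriptions --- so that the extending homomorphisms are indeed unique, and checking that the structure maps $M_0 \to \bN_0[M_0]$ and $M \hookrightarrow \diff(M)$ are compatible with the isomorphism furnished by Proposition~\ref{p:iso-monoidsemiring}, so that the chain of identifications lines up with the relevant structure maps.
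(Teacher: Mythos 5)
Your proposal is correct and follows essentially the same route as the paper: invoke Proposition~\ref{p:iso-monoidsemiring} to get $\bN_0[M_0]\cong M$, then identify $\diff(M)$ with $\bZ[M_0]$ via the universal properties. The paper compresses the second step into the remark that $\bZ[M_0]$ ``is easily seen to be'' the ring of differences of $\bN_0[M_0]$; your chaining of the three universal properties is exactly the justification of that remark.
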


\begin{proof}
  We know $\bN_0[M_0] \cong M$ from Proposition~\ref{p:iso-monoidsemiring}.
  The monoid algebra $\bZ[M_0]$ is easily seen to be the ring of differences of $\bN_0[M_0]$, and hence the semiring isomorphism $\bN_0[M_0] \cong M$ extends to a ring isomorphism $\bZ[M_0] \cong D(M)$.
\end{proof}

\begin{corollary} \label{c:iso-ringseries}
  Let $M$ be a graph semiring with ring of differences $\diff(M)$, and with the following properties.
  \begin{enumerate}
  \item \label{isoringseries:submonoid} The connected graphs in $M$ form a submonoid $M_0$.
  \item \label{isoringseries:fin} No graph in $M$ has infinitely many isomorphic connected components.
  \item \label{isoringseries:all} The set $M$ contains all \textup(infinite\textup) disjoint unions of elements of $M_0$, subject to the restriction \textup{\ref{isoringseries:fin}}.
  \item \label{isoringseries:order} The monoid $M_0$ is a strictly well-ordered monoid.
  \end{enumerate}
  Then
  \[
    \hmnser{\bZ}{M_0} \cong D(M).
  \]
\end{corollary}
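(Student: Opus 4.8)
The plan is to run the proof of Corollary~\ref{c:iso-monoidalgebra} almost verbatim, with the monoid semiring/algebra pair $(\bN_0[M_0],\bZ[M_0])$ replaced by the generalized power series pair $(\hmnser{\bN_0}{M_0},\hmnser{\bZ}{M_0})$. Conditions \ref{isoringseries:submonoid}--\ref{isoringseries:order} are precisely the hypotheses of Proposition~\ref{p:iso-series}, which hands us a semiring isomorphism $\hmnser{\bN_0}{M_0}\cong M$; and \ref{isoringseries:submonoid} together with \ref{isoringseries:order} also guarantee that the ring $\hmnser{\bZ}{M_0}$ is defined (the Cauchy products are finite sums because $M_0$ is strictly well-ordered, using Lemma~\ref{l:wom}\ref{wom:ff}). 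Since rings of differences are determined up to unique isomorphism by their universal property, it suffices to show that $\hmnser{\bZ}{M_0}$ is \emph{a} ring of differences of $\hmnser{\bN_0}{M_0}$: the isomorphism $\hmnser{\bN_0}{M_0}\cong M$ then extends uniquely to a ring isomorphism $\hmnser{\bZ}{M_0}\cong D(M)$.

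To do this I would first identify $\hmnser{\bZ}{M_0}$ with the concrete ring of differences $D\coloneqq(\hmnser{\bN_0}{M_0}\times\hmnser{\bN_0}{M_0})/\!\simeq$ constructed in the lemma preceding Corollary~\ref{c:iso-monoidalgebra}. The inclusion $\hmnser{\bN_0}{M_0}\hookrightarrow\hmnser{\bZ}{M_0}$ is a semiring homomorphism, every element of $\hmnser{\bN_0}{M_0}$ has an additive inverse in $\hmnser{\bZ}{M_0}$, and — the only point needing a word — every $f\in\hmnser{\bZ}{M_0}$ is a difference of elements of $\hmnser{\bN_0}{M_0}$: set $f^+(m)=\max\{f(m),0\}$ and $f^-(m)=\max\{-f(m),0\}$, so that $f=f^+-f^-$ with $f^\pm\colon M_0\to\bN_0$. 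These are legitimate elements of $\hmnser{\bN_0}{M_0}$ because $\hmnser{S}{M_0}$ is by definition the set of \emph{all} functions $M_0\to S$, with no support restriction to worry about (the supports of $f^\pm$ are in any case contained in that of $f$). Consequently the map $[(g,h)]_{\simeq}\mapsto g-h$ (pointwise subtraction in $\hmnser{\bZ}{M_0}$) is a well-defined additive bijection: it is surjective by the decomposition just given, and injective because $g-h=g'-h'$ pointwise is literally the relation $g+h'=g'+h$ defining $\simeq$. It also respects multiplication, since the product on $D$ is by construction $(g-h)(g'-h')=(gg'+hh')-(gh'+hg')$, which is exactly the product of $g-h$ and $g'-h'$ computed in the ring $\hmnser{\bZ}{M_0}$ (the Cauchy product on $\bZ$-valued functions restricts to the one on $\bN_0$-valued functions, and distributivity holds). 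Thus $\hmnser{\bZ}{M_0}\cong D$ inherits the universal property of the ring of differences of $\hmnser{\bN_0}{M_0}$.

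Combining the two isomorphisms: $\hmnser{\bZ}{M_0}$ is a ring of differences of $\hmnser{\bN_0}{M_0}\cong M$, so by uniqueness of rings of differences $\hmnser{\bZ}{M_0}\cong D(M)$, the isomorphism being the unique one extending $\hmnser{\bN_0}{M_0}\cong M$. I do not expect a genuine obstacle; the one spot worth stating explicitly is that the positive and negative parts of a $\bZ$-valued series are again admissible power series, which is why recording that $\hmnser{S}{M_0}$ consists of all functions (and not merely those with a size-restricted support) is the relevant hypothesis here.
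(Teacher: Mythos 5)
Your proposal is correct and follows the paper's own route exactly: invoke Proposition~\ref{p:iso-series} for the semiring isomorphism $\hmnser{\bN_0}{M_0}\cong M$, then observe that $\hmnser{\bZ}{M_0}$ is the ring of differences of $\hmnser{\bN_0}{M_0}$ and conclude by the universal property. The paper leaves the ring-of-differences identification as an assertion, whereas you verify it via the pointwise decomposition $f=f^+-f^-$; this is a correct and welcome elaboration of the same argument, not a different approach.
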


\begin{proof}
  We know $\hmnser{\bN_0}{M_0} \cong M$ from Proposition~\ref{p:iso-series}.
  Since $\hmnser{\bZ}{M_0}$ is the ring of differences of $\hmnser{\bN_0}{M_0}$, the claim follows.
\end{proof}

Consider the \emph{modified rooted hierarchical product} of finite graphs.
The submonoid of connected graphs is a free monoid on countably many generators.
Hence the associated monoid algebra is simply the free algebra $\bZ\langle X \rangle$ in a countable set $X$, where addition in the algebra corresponds to the disjoint union of graphs.
If we allow infinite graphs where each connected component is finite, and every component appears at most finitely often up to isomorphism, we obtain the ring of formal noncommutative power series in countably many indeterminates $\bZ\llangle X \rrangle$.

For the \emph{generalized rooted hierarchical product}
\[
  \bZ\big[ [Y] * \langle X\rangle \big] \cong \bZ[Y] *_\bZ \bZ\langle X\rangle,
\]
is a coproduct of a polynomial ring in countably many indeterminates and a free algebra in countably many indeterminates.
In the terminology of Cohn \cite{cohn06}, this is the \defit{free $\bZ[Y]$-ring$_{\bZ}$}
\[
  \bZ[Y]_\bZ \langle X \rangle.
\]
on the set $X$.
Allowing arbitrary sets $X$ and $Y$ (or at least permitting $Y=\emptyset$) we recognize the rings of the modified rooted hierarchical product as a special case.

Before we summarize the pertinent information, let us recall that in ring theory a domain is  a non-zero ring in which $ab = 0$ implies $a = 0$ or $b = 0$.

\begin{theorem} \label{thm:main}
  Let $X$, $Y$ be \textup(possibly empty\textup) sets.
  Then the monoid $M\coloneqq [Y] * \langle X \rangle$ is strictly well-ordered, and the rings
  \[
  \bZ[M]\cong \bZ[Y] *_\bZ \bZ\langle X \rangle \quad\text{and}\quad \hmnser{\bZ}{M}.
  \]
  are strictly ordered.
  In particular, they are domains, have the cancellation property, and $f^n=g^n$ for some $n \ge 1$ implies $f=\pm g$.
\end{theorem}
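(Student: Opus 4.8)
The plan is to derive Theorem~\ref{thm:main} from the already-established Theorem~\ref{t:free-semigroup-rings} together with the ring-of-differences machinery set up immediately before the statement. First I would recall that by Lemma~\ref{l:order-coproduct} the monoid $M = [Y] * \langle X \rangle$ is strictly well-ordered, which is the only structural input needed. Next, observe that both $\bZ[M]$ and $\hmnser{\bZ}{M}$ are precisely the rings of differences of the semirings $\bN_0[M]$ and $\hmnser{\bN_0}{M}$, respectively (this is exactly the content of the discussion around Corollaries~\ref{c:iso-monoidalgebra} and \ref{c:iso-ringseries}, or can be checked directly from the universal properties). So the task reduces to upgrading the strict order on each semiring $S \in \{\bN_0[M], \hmnser{\bN_0}{M}\}$, provided by Proposition~\ref{p:semiring-order}, to a strict order on its ring of differences $D(S)$.

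The second and main step is this extension of the order. Given the strict total order $\le$ on $S$ with all nonzero elements positive, I would define the set of ``positive'' elements of $D(S)$ to be $P \coloneqq \{\, a - b : a, b \in S,\ b < a \,\} \cup \{0\}$ together with its negatives $-P$ forming the negative cone; equivalently, declare $x < y$ in $D(S)$ iff $y - x \in P \setminus \{0\}$. One must check: (i) $D(S) = P \cup (-P)$ and $P \cap (-P) = \{0\}$, i.e.\ the order is total — this follows because every element of $D(S)$ has the form $a - b$ with $a, b \in S$, and $S$ is totally ordered; (ii) $P + P \subseteq P$, which follows from $a < a'$, $b < b'$ in $S$ implying $a + b < a' + b'$ (strict compatibility of $+$ in $S$); (iii) $P \cdot P \subseteq P$, which is where one uses that multiplication by a positive element in $S$ is strictly order-preserving: if $x = a - b > 0$ and $y = c - d > 0$ in $D(S)$ then $xy = (ac + bd) - (ad + bc)$, and $b < a$, $d < c$ give $ad + bc < ac + bd$ after a short computation using Proposition~\ref{p:semiring-unique}(1)-style cancellation and the strict monotonicity of $S$; (iv) the sign rules for multiplying by negative elements then follow formally. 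Well-definedness of $P$ (independence of the representative $a-b$) uses additive cancellativity of $S$. This makes $D(S)$ a strictly ordered ring.

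Finally, I would read off the stated consequences from strict orderedness of $D(S)$, exactly as in Proposition~\ref{p:semiring-unique} but now over a ring. A strictly ordered ring is automatically a domain: if $ab = 0$ with $a, b \ne 0$, WLOG $a > 0$, then $b > 0$ forces $ab > 0$ and $b < 0$ forces $ab < 0$, contradiction. Cancellation $ac = bc \Rightarrow a = b$ for $c \ne 0$ is identical to the semiring argument. For roots: if $f^n = g^n$ with $n \ge 1$, first note $f = 0 \iff g = 0$; otherwise replace $f, g$ by $\pm f, \pm g$ so that $f, g > 0$, and then $f < g$ would give $f^n < g^n$ by induction (as in Proposition~\ref{p:semiring-unique}(2)), and symmetrically, so $f = g$; undoing the sign changes yields $f = \pm g$ (the ambiguity is genuine since $(-h)^n = h^n$ for even $n$). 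The isomorphism $\bZ[M] \cong \bZ[Y] *_\bZ \bZ\langle X\rangle$ is the monoid-algebra identity for a coproduct of monoids, discussed in the preceding paragraphs of the paper.

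The main obstacle is step two, the order extension — in particular verifying $P \cdot P \subseteq P$, since one must carefully manipulate the expression $(ac+bd) - (ad+bc)$ within $S$ (which has no subtraction) and invoke strict monotonicity and cancellativity of the semiring order in the right combination; everything else is bookkeeping on top of results already proved.
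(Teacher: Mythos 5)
Your overall strategy --- extend the strict order from the semiring $\bN_0[M]$ (resp.\ $\hmnser{\bN_0}{M}$) to its ring of differences via a positive cone --- is genuinely different from the paper's, and it is precisely at the step you yourself flag as ``the main obstacle'' that there is a real gap. To get $P \cdot P \subseteq P$ you need, for $a,b,c,d \in S$ with $b<a$ and $d<c$, the inequality $ad+bc < ac+bd$ in $S$. The axioms of a strictly ordered semiring only let you multiply an inequality by an honest element of $S$, which yields $ac>bc$, $ad \ge bd$, $ac>ad$ and $bc \ge bd$; these four comparisons do not determine the relative order of $ac+bd$ and $ad+bc$ (numerically: $ac=10$, $ad=6$, $bc=5$, $bd=0$ satisfy all four, yet $ac+bd=10<11=ad+bc$). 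The standard argument ``$(a-b)(c-d)>0$'' requires subtraction, i.e.\ it presupposes the order on the ring you are in the middle of constructing. So the promised ``short computation using cancellation and strict monotonicity'' is not available at this level of generality; to verify the inequality for the semirings at hand you have to fall back on the explicit description of the order (compare coefficients at the least element of the support), at which point you are re-proving the multiplicativity step of Proposition~\ref{p:semiring-order} with integer coefficients.

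That is exactly what the paper does, and far more economically: the definition of a strictly ordered semiring already contains the clause ``if $c<0$ then $ac>bc$'', so $\bZ$ with its usual order is itself a strictly ordered semiring, and Proposition~\ref{p:semiring-order} applies verbatim with coefficient semiring $S=\bZ$ to order $\bZ[M]$ and $\hmnser{\bZ}{M}$ directly (its proof explicitly treats the case $h<0$); no detour through rings of differences or positive cones is needed. The remaining ingredients of your write-up --- $M$ strictly well-ordered via Lemma~\ref{l:order-coproduct}, a strictly ordered ring is a domain, cancellation, and the sign analysis turning $f^n=g^n$ into $f=\pm g$ --- agree with the paper and are correct. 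If you wish to keep the cone construction, the honest fix is to define $P$ directly as the set of series whose coefficient at the least element of the support is positive and check $P\cdot P\subseteq P$ as in Proposition~\ref{p:semiring-order}; but then the passage through $D(S)$ buys you nothing.
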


\begin{proof}
  Almost everything follows as in Theorem~\ref{t:free-semigroup-rings} (noting that $\bZ$ is strictly ordered under the usual order).
  It only remains to check that $f^n = g^n$ for some $n \ge 1$ implies $f=\pm g$ even if one (or both) of $f$,~$g$ are negative.
  If $f$ and $g$ are both negative, we can replace them by $-f$ and $-g$ and get $-f=-g$ from the positive case, hence $f=g$.
  If $f < 0 < g$ and $f^n = g^n$, then $n$ must be even, and hence $(-f)^n=g^n$.
  Thus $f=-g$.

  For the cancellation property, we could invoke Proposition~\ref{p:semiring-unique}, but it is also well-known that a nonzero ring is a domain if and only if its nonzero elements are cancellative.
\end{proof}

If $X=0$, then $\bZ[M]=\bZ[Y]$ is a commutative polynomial ring in possibly infinitely many variables (this case arises, for instance, for the Cartesian product of finite graphs).
In this case $\hmnser{\bZ}{Y}=\bZ\llbracket Y \rrbracket$ is a commutative formal power series ring.

If $Y$ is infinite, the ring $\bZ\llbracket Y \rrbracket$ is \emph{not} the completion
\[
  \widehat{\bZ[Y]} \cong \varprojlim \bZ[Y] / I^n.
\]
with respect to the ideal $I$ generated by $Y$.
Indeed, in the quotient $\bZ[Y]/I^n$ every element has only finite support, even though $Y$ is infinite.
Therefore $\widehat{\bZ[Y]}$ is the subring of $\bZ\llbracket Y \rrbracket$ consisting of those series whose support contains only finitely many elements for every given total degree $n$.

\begin{remark}
  \begin{enumerate}
    \item
  While rings are better understood than semirings, we do lose some information about factorizations of graphs in passing to them.
  After all, in a graph, there is no concept of a connected component with negative multiplicity.
  However, allowing negative coefficients significantly alters the factorization properties.

  For a simple example, consider the semiring $M$ of finite graphs under the Cartesian product.
  Connected graphs factor uniquely into prime graphs, so that $M_0=[Y]$ with $Y$ the set of prime graphs.
  Thus $M \cong \bN_0[Y]$ and $D(M) \cong \bZ[Y]$, a polynomial ring in countably many variables.
  The ring $\bZ[Y]$ is a unique factorization domain, but the semiring $\bN_0[Y]$ does not have unique factorizations.
  This observation was used in \cite{fele07} to prove the unique root property for finite Cartesian products, and corresponds to the well-known fact that factorizations of \emph{disconnected} graphs into prime graphs are not unique with respect to the Cartesian product.

  The study of factorizations in semirings has recently gained considerable traction \cite{baeth-gotti20,correamorris-gotti22,chapman-polo23,gotti-polo23,gotti-polo23b,jiang-li-zhu23}.
  In particular, the univariate polynomial semiring $\bN_0[x]$ was recently studied by Cesarz, Chapman, McAdam and Schaeffer \cite{cesarz-chapman-mcadam-schaeffer09} and by Campanini and Facchini \cite{campanini-facchini19}.
  This semiring is not even half-factorial, that is, an element may have factorizations of different lengths.
  Even more, it was shown that, for every rational $r \ge 1$, there exists an $f \in \bN_0[x]$ such that the ratio of the lengths of the longest to the shortest factorization of $f$ is precisely $r$. Moreover, for every $d \ge 0$, there exists some $f\in \bN_0[x]$ and $l \ge 1$ such that $f$ has factorizations of lengths $l$ and $l+d$, but no factorization of length strictly between $l$ and $l+d$. See \cite[Theorem 2.3]{cesarz-chapman-mcadam-schaeffer09} and the comment following it.
  These results immediately imply corresponding ones for Cartesian products of (disconnected) finite graphs.
  Motivated by the case of the Cartesian product, it would be interesting to study the non-unique factorizations of $\bN_0[Y]$ with $Y$ countable (or even finite and $\card{Y} \ge 2$) in more detail.

  \item For graph products in the classical sense of Imrich and Izbicki \cite{imiz1975}, which are defined purely in terms of the adjacency relation (and therefore do not include the various rooted products), it is known that only six products give rise to (possibly non-unital) rings \cite[Theorem 2]{campanelli-friasarmenta-martinezmorales17}.
  \end{enumerate}
\end{remark}

\section{Applications to graph products}\label{sec:applications}

We now apply the main result, Theorem~\ref{thm:main}, to graph products to show
the unique root property and the  cancellation property, as defined below. We begin with
\smallskip

\noindent\textbf{1.  The Generalized Rooted Hierarchical Product.}
By the previous sections it is clear that the ring $R=\hmnser{\bZ}{[Y]*\langle X \rangle}$ arising from the finite connected graphs $G[U]$, $U\neq \emptyset$, that are prime with respect to the generalized rooted hierarchical product satisfies the conditions of Theorem~\ref{thm:main}. Hence, for any two elements $a, b\in R$ of the ring with non-negative coefficients, the equality  $a^n = b^n$ for some  $n\ge 1$, implies that $a=b$.
Furthermore, if $a,b,c \in R$, where $a,b,c$ have only non-negative coefficients, then either of the identities $ca=cb$ or $ac=bc$ implies that $a=b$ (unless $c$ is empty), because $R$ is a domain.
We call the first property the \defit{unique root property}  and the second the \defit{cancellation property}.

\begin{theorem}\label{thm:rghp} Finite and countably infinite graphs, where each connected component is a finite graph with non-empty root set and finite multiplicity,  have the unique root property, and the cancellation property, with respect to the generalized rooted hierarchical product.
\end{theorem}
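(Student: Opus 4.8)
The plan is to realize the class of graphs in the statement as a graph semiring $M$ and then identify it, via Proposition~\ref{p:iso-series}, with a generalized power series semiring $\hmnser{\bN_0}{M_0}$ over the strictly well-ordered monoid $M_0 = [Y]*\langle X \rangle$; the unique root and cancellation properties then fall out of Theorem~\ref{t:free-semigroup-rings} (equivalently Propositions~\ref{p:semiring-order} and \ref{p:semiring-unique}), since graphs correspond exactly to the elements with non-negative coefficients, all of which are $\ge 0$ in the strict order. Concretely, I would let $M$ be the set of (isomorphism classes of) graphs each of whose connected components is finite, has non-empty root set, and occurs with finite multiplicity, equipped with disjoint union as addition and the modified rooted generalized hierarchical product as multiplication.

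The first, and substantive, step is to check that $M$ really is a graph semiring. Associativity and the neutral elements $0$ (the empty graph) and $K_1$ are inherited from Section~\ref{sec:graphproducts}; the point requiring care is distributivity together with closure of $M$ under the product. Right distributivity fails for the unrestricted generalized rooted hierarchical product, and the task is to verify that it is restored, and that the product stays inside $M$, precisely because every component of every factor carries a non-empty root set. The mechanism is that two vertices $(g,h)$, $(g',h')$ lie in a common component of $G[U]\sqcap H[V]$ only when $h$, $h'$ belong to a single component $H_j$ of $H$, and the sheets $\{g\}\times H_j$ are linked by edges of the second kind exactly when $V\cap V(H_j)\ne\emptyset$; hence $G[U]\sqcap H[V]$ is the disjoint union of the connected graphs $G_i[U\cap V(G_i)]\sqcap H_j[V\cap V(H_j)]$, each with non-empty root set, and finite multiplicities are preserved. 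This simultaneously yields distributivity and closure, and shows that the connected elements of $M$ form the submonoid $M_0$ of finite connected graphs with non-empty root set.

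Next I would invoke \cite[Theorem 4.2]{imrich-kalinowski-pilsniak23} to identify $M_0 \cong [Y]*\langle X \rangle$ with $X$, $Y$ countable, which is strictly well-ordered by Lemma~\ref{l:order-coproduct}. Thus all four hypotheses of Proposition~\ref{p:iso-series} hold: \ref{isoseries:fin} is the finite-multiplicity condition and \ref{isoseries:all} holds by the very definition of $M$. We obtain $M\cong\hmnser{\bN_0}{M_0}$; since $\bN_0$ is strictly ordered, Proposition~\ref{p:semiring-order} makes this semiring strictly ordered with every nonzero graph lying $>0$, and Proposition~\ref{p:semiring-unique} then gives cancellation by nonzero elements and $a^n=b^n\Rightarrow a=b$ for $a,b\ge 0$. (One could equally pass to the ring $\hmnser{\bZ}{M_0}\cong D(M)$ via Corollary~\ref{c:iso-ringseries} and quote Theorem~\ref{thm:main}, the non-negativity of graph-elements removing the sign ambiguity.) Transporting these statements back through the isomorphism is exactly the assertion of the theorem. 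The main obstacle is the distributivity-and-closure verification of the first step; everything afterwards is an application of results already established, with the precise shape $[Y]*\langle X \rangle$ of $M_0$ quoted from \cite{imrich-kalinowski-pilsniak23} rather than reproved.
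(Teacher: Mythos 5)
Your proposal is correct and follows essentially the same route as the paper: identify the class of graphs with the graph semiring $M$ under the modified generalized rooted hierarchical product, use \cite[Theorem 4.2]{imrich-kalinowski-pilsniak23} to get $M_0\cong [Y]*\langle X\rangle$, and apply Proposition~\ref{p:iso-series} together with Theorem~\ref{t:free-semigroup-rings} (the paper phrases this via $D(M)\cong\hmnser{\bZ}{[Y]*\langle X\rangle}$ and Theorem~\ref{thm:main}, which you note as the equivalent alternative). Your explicit verification of closure and distributivity is a welcome addition that the paper leaves to the discussion in Section~\ref{sec:graphproducts}, but it does not change the argument.
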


\begin{proof}
  Let $M$ be the set of (isomorphism classes of) finite and countably infinite graphs, where each connected component is a finite graph with non-empty root set and finite multiplicity.
  Then $M$ is a graph semiring with respect to the modified generalized hierarchical product.
  As discussed after Corollary~\ref{c:iso-ringseries}, the ring of differences $D(M)$ of $M$ is isomorphic to $R=\hmnser{\bZ}{[Y]*\langle X \rangle}$.
  Thus $D(M)$ satisfies the conclusion of Theorem~\ref{thm:main}.

  In particular, suppose $A,B,C$ are finite or countably infinite graphs, where each connected component is a finite graph with non-empty root set.
  Assume that every connected component appears only finitely often up to isomorphism.
  Then the isomorphism classes of  $A$, $B$,~$C$ correspond to elements $a$, $b$,~$c\in R$, all of whose coefficients are non-negative.
  Hence, if $A^n \cong B^n$ for some $n \ge 1$, then $A \cong B$.
  And if $C$ is non-empty and $A \sqcap C \cong B \sqcap C$, then $A \cong B$.
\end{proof}

We now list other products and domains on which they have the unique root and the cancellation property.
\smallskip

\noindent \textbf{2. The Rooted Hierarchical Product.} It has the unique root, and the cancellation property, for finite and countably
infinite graphs, where each connected component is a finite rooted graph and each connected component has finite multiplicity. Note that in this case the free abelian monoid $Y$ in the characterization of $R$ as $\bZ[Y]_\bZ\langle X \rangle$  is empty.
\smallskip

\noindent \textbf{3. The Cartesian Product.} We introduced it as a special case of the
generalized hierarchical product. It thus clearly has
 the unique root and the cancellation property for finite and countably infinite graphs, where each connected component is finite with finite multiplicity. Here $X$ is empty.
\smallskip

Let us remark that the definition of the  Cartesian product extends to the class of graphs with loops, which we denote by $\Gamma_0$. By Boiko, Cuno,  Imrich, Lehner, and van de Woestijne
\cite{bocu2016} the unique prime factorization property holds for all finite connected graphs in $\Gamma_0$ that have at least one unlooped vertex. Hence, in $\Gamma_0$ this product has the unique root and cancellation property for finite and infinite graphs, in which each connected component is finite,  contains at least one unlooped vertex, and has finite multiplicity. Here too, the set $X$ is empty.
\smallskip

One can also extend the Cartesian product to hypergraphs. By Imrich \cite{im1967} connected hypergraphs have the unique prime factorization property, and distributivity with respect to the disjoint union of hypergraphs holds.  Therefore, countably infinite hypergraphs,  where each connected component is finite with finite multiplicity, also have  the unique root and cancellation property.  For a proof in English of the unique prime factorization property for connected hypergraphs we refer to Gringmann \cite{gring2010}.
\smallskip

\noindent
\textbf{4. The Direct Product.}  We  define this product on the class $\Gamma_0$ of graphs in which loops are allowed.
Let $G, H \in \Gamma_0$. Then the edges of the \defit{direct product} $G\times H$ are all pairs of vertices $(g,h)(g',h')$, where $gg'\in E(G)$ and $hh'\in E(H)$. The product is commutative,  and the one vertex graph with a loop is the neutral element. Note that the direct product of loopless graphs is loopless too.

In general, prime factorization with respect to the direct products is not unique, not even for connected graphs. However, it is unique for finite connected non-bipartite graphs, where a graph is \defit{non-bipartite} if it contains at least one cycle of odd length (which can also be a loop).
This follows from a general result of McKenzie \cite{mk1971} about infinite relational structures. For a graph theoretic proof see Imrich \cite{im1998}.

The unique prime factorization for products of finite connected non-bipartite graphs implies that Theorem~\ref{thm:rghp} also holds for the direct product of finite and infinite graphs, if each connected component is finite and non-bipartite, and has finite multiplicity.

For finite graphs in $\Gamma_0$ we have the following theorem of Lov\'asz \cite{lo1971}.

\begin{theorem}\label{thm:lo}
Let $G$,~$H \in \Gamma_0$. If $G^n \cong H^n$ for some $n \ge 1$, where
powers are taken with respect to the direct product, then $G\cong H$.
Furthermore, if $A$, $B$,~$C \in \Gamma_0$ and if there are
homomorphisms from $A$ and $B$ to $C$,  then
$A\times C\cong B\times C$, implies $A\cong B$.
\end{theorem}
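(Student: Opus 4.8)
The plan is to run the classical homomorphism-counting argument of Lov\'asz. For finite graphs (loops allowed) $X$ and $Y$, let $\hom(X,Y)$ be the number of graph homomorphisms $X\to Y$ and $\operatorname{inj}(X,Y)$ the number of injective ones. Everything rests on two facts. The first is the \emph{product formula}: since the direct product $\times$ is the categorical product in $\Gamma_0$ — a map $X\to G_1\times G_2$ is a homomorphism if and only if both coordinate maps are, checked edge by edge — one has
\[
  \hom(X, G_1\times G_2)=\hom(X,G_1)\,\hom(X,G_2),
\]
and in particular $\hom(X,G^n)=\hom(X,G)^n$. The second is \emph{Lov\'asz's isomorphism criterion}: if $\hom(X,G)=\hom(X,H)$ for every finite $X$, then $G\cong H$. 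I would prove the criterion along the usual lines: from the identity $\hom(X,Y)=\sum_{\pi}\operatorname{inj}(X/\pi,Y)$, where $\pi$ runs over the set partitions of $V(X)$ and $X/\pi$ is the quotient graph, an induction on $\abs{V(X)}$ shows that the vector $\hom(-,G)$ determines the vector $\operatorname{inj}(-,G)$. Then $\operatorname{inj}(G,H)=\operatorname{inj}(G,G)\ge 1$ and, symmetrically, $\operatorname{inj}(H,G)\ge 1$ force $\abs{V(G)}=\abs{V(H)}$; evaluating the hom-counts on a single looped vertex and on a single (unlooped) edge recovers the number of loops and the number of non-loop edges, so these agree for $G$ and $H$; and a bijection $V(G)\to V(H)$ preserving edges between two graphs with equally many edges is forced to be an isomorphism.

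Granting the criterion, both assertions follow quickly. For the unique root property, $G^n\cong H^n$ gives $\hom(X,G)^n=\hom(X,H)^n$ for every finite $X$; since both sides lie in $\bN_0$ and $n\ge 1$, we get $\hom(X,G)=\hom(X,H)$ for all $X$, hence $G\cong H$. For cancellation, fix homomorphisms $A\to C$ and $B\to C$, so $\hom(A,C),\hom(B,C)\ge 1$, and assume $A\times C\cong B\times C$; the product formula gives $\hom(X,A)\hom(X,C)=\hom(X,B)\hom(X,C)$ for all finite $X$. Taking $X=A$ and cancelling the nonzero integer $\hom(A,C)$ yields $\hom(A,B)=\hom(A,A)\ge 1$, so there is a homomorphism $A\to B$, and symmetrically one $B\to A$; thus $A$ and $B$ are homomorphically equivalent. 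Consequently, for every finite $X$, either $\hom(X,A)=\hom(X,B)=0$ (a homomorphism $X\to A$ would compose with $A\to B$, and vice versa, so these vanish together), or $\hom(X,A)\ne 0$, in which case composing $X\to A\to C$ shows $\hom(X,C)\ne 0$ and cancelling it in the displayed identity gives $\hom(X,A)=\hom(X,B)$. Either way the hom-vectors of $A$ and $B$ coincide, so $A\cong B$.

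The main obstacle is the isomorphism criterion itself: the inductive inversion passing from $\hom$-counts to $\operatorname{inj}$-counts, and then the careful bookkeeping that extracts the vertex count, the loop count, and the non-loop-edge count needed to promote an edge-preserving bijection to a genuine isomorphism — a point where the presence of loops in $\Gamma_0$ demands a little extra attention. Once the criterion is available, the two statements are short deductions from the product formula, the cancellation one hinging on the homomorphic-equivalence trick that reduces matters to the graphs $X$ with $\hom(X,C)\ne 0$.
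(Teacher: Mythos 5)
Your proposal is correct and is essentially the classical homomorphism-counting argument of Lov\'asz, which is exactly what the paper relies on: Theorem~\ref{thm:lo} is quoted from \cite{lo1971} without proof, and your argument (the product formula for the categorical product, the $\hom$-vector isomorphism criterion via inversion of $\hom(X,Y)=\sum_{\pi}\operatorname{inj}(X/\pi,Y)$, and the homomorphic-equivalence trick to handle the graphs $X$ with $\hom(X,C)=0$ in the cancellation step) reconstructs that cited proof faithfully, including the extra care needed for loops in $\Gamma_0$.
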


Let us see how this compares with the results we can achieve. The indeterminates in our rings are prime graphs and we need unique prime factorization.

This means that Lov\'asz' unique root result is stronger than ours for finite graphs, but it does not cover infinite graphs.

Also, our cancellation result only covers non-bipartite graphs, but for them it is stronger.
\smallskip

\noindent
\textbf{5. The Strong Product.} The strong product is defined in the class of graphs without loops. For  the strong product $G\boxtimes H$ of  graphs $G$,~$H$ one sets $E(G\boxtimes H) = E(G\Box H) \cup E(G\times H)$. Again, the product is commutative, associative and $K_1$ is the neutral element. Furthermore, as shown by D{\"o}rfler and Imrich \cite{doim1970}, each connected finite graph has the unique prime factorization property with respect to the strong product. This implies that all countably infinite graphs with finite connected components, and each connected component appearing only finitely often up to isomorphism, have the unique root and cancellation property. For infinite graphs this is new, but for finite graphs it is also a consequence of Theorem~\ref{thm:lo}.

To see this, just observe that if one first adds loops to all vertices of graphs $G$, $H$, then multiplies the resulting graphs with respect to direct product, and finally removes all loops again, one obtains the strong product $G\boxtimes H$.    The strong product can thus be considered as special case of the direct one. Moreover, since every graph with a loop is non-bipartite, this is not a restriction for the strong product.
This observation already takes care of the unique root property. The cancellation property follows, because there are always homomorphisms from $A$ and $B$
into $C$ if $C$ has at least one loop; one simply  maps $A$ and $B$ into such a vertex and its loop.
\smallskip

\noindent
\textbf{6. The Lexicographic Product.} Given graphs $G$,~$H$, two vertices $(g,h)$, $(g',h')$ are adjacent in the \defit{lexicographic product} $G\circ H$ if either $gg' \in E(G)$, or $g=g'$ and $hh'\in E(H)$. It is non-commutative, associative and $K_1$ is a unit. It is right-distributive with respect to the disjoint union of graphs, but not left-distributive. Hence our methods do not directly apply.

Nonetheless,  by Imrich \cite{im1972},
finite graphs have the unique root and the cancellation property with respect to the lexicographic product.

To extend at least part of this result to infinite graphs we observe that by \cite{im1972} prime factorization of finite connected graphs with respect to the lexicographic product is unique for products of prime connected graphs without a $K_n$, $n> 1$, as a factor. As a matter of curiosity, let us remark that, by \cite{im1972},  for any given finite graph, the number of factors in any prime factorization with respect to the lexicographic product is always the same, despite the fact that prime factorization is in general not unique.

Moreover, by Imrich \cite{im1969},  two finite connected non-trivial graphs commute with respect to the lexicographic product if and only if they are either both complete, edgeless, or powers of one and the same graph.

We can thus consider the free monoid formed by the prime, connected graphs that are not complete. However, due to the failure of left distributivity, we do not obtain a semiring structure.
To obtain left distributivity we introduce the \defit{modified lexicographic product}  by letting it coincide with the lexicographic product for connected graphs and by setting $C\circ (\bigcup_{i\ge 1} A_i)
 = \bigcup_{i\ge 1} (C\circ A_i)$ for disconnected ones with countably many connected components. Clearly the modified lexicographic product also has the  unique root property, as well as the cancellation property.
 \smallskip

\begin{example} \label{exm:infinite-bad}
  When considering infinite graphs, we had to assume that each connected component appears only finitely often up to isomorphism.
This is because on the semiring side, the connected components correspond to indeterminates, and their multiplicity is recorded in the coefficient, which is an element of $\bN_0$.
Of course, we could consider the semiring $S=\bN_0 \cup \{\aleph_0, \aleph_1, \dots, \kappa\}$ of cardinal numbers (up to some bound $\kappa$, to avoid set-theoretic issues) with cardinal arithmetic, and extend our isomorphism result to $\hmnser{S}{M_0} \cong M$.
However, the semiring $S$ is no longer strictly ordered (or even cancellative) and so neither is $M$.

This is not just a defect of our approach.
To see how the cancellation property and the unique root property can fail for associative graph products that are distributive over infinite disjoint unions, let $G$ be some connected finite graph, and let $G'\coloneqq \aleph_0 G$ be a countable disjoint union of copies of $G$.
Then clearly $\aleph_0 G + G = \aleph_0 G + n G$ for all finite $n \ge 0$, so that additive cancellation fails (because $G \ne nG$ if $n \ne 1$).

Suppose now that $G^n$ is connected for all $n$ and that $G$ is not trivial, so that $G^n \ne G^m$ for $m \ne n$ (the powers are taken with respect to the graph product under consideration).
Fix a sequence $(a_n)_{n \ge 2}$ of positive integers, and consider
\[
H = \aleph_0 G + a_2 G^2 + a_3 G^3 + \cdots,
\]
Then
\[
H^2 = \aleph_0 G^2 + \aleph_0 G^3 + \aleph_0 G^4 + \cdots,
\]
independent of the sequence $(a_n)_{n \ge 2}$.
Thus the unique root property fails.
\end{example}

\paragraph{Acknowledgement.} We thank the reviewers for their comments, which helped to improve the final version of the paper.

\bibliographystyle{adamjoucc}
\bibliography{graphproduct}
\end{document}